\def\NP{}
\newcommand{\comment}[1]{}
\newcommand{\ind}{{\bf 1}}
\def\inddd#1{{\ind}_{\left\{#1\right\}}}
\def\indn#1{\{#1_n\}_{n\in\N}}
\def\inddds#1{{\ind}_{\left\{#1\right\}}^*}
\newcommand{\proba}{\mathbb P}
\newcommand{\esp}{{\mathbb E}}
\newcommand{\inv}{^{-1}}
\newcommand{\eqnh}{\begin{eqnarray*}}
\newcommand{\eqne}{\end{eqnarray*}}
\newcommand{\eqnhn}{\begin{eqnarray}}
\newcommand{\eqnen}{\end{eqnarray}}
\newcommand{\equh}{\begin{equation}}
\newcommand{\eque}{\end{equation}}
\def\summ#1#2#3{\sum_{#1 = #2}^{#3}}
\def\prodd#1#2#3{\prod_{#1 = #2}^{#3}}
\def\sif#1#2{\sum_{#1=#2}^\infty}
\newcommand{\eqd}{\stackrel{d}{=}}
\def\topp#1{^{(#1)}}
\def\abs#1{\left|#1\right|}
\def\ccbb#1{\left\{#1\right\}}
\def\pp#1{\left(#1\right)}
\def\mmid{\;\middle\vert\;}
\def\ceil#1{\left\lceil #1 \right\rceil}
\def\vv#1{{\boldsymbol #1}}
\def\qmand{\quad\mbox{ and }\quad}
\def\mwith{\mbox{ with }}
\def\qmwith{\quad\mbox{ with }\quad}
\def\mfa{\mbox{ for all }}
\def\mmas{\mbox{ as }}
\def\wt#1{\widetilde{#1}}
\def\wb#1{\overline{#1}}
\def\what#1{\widehat{#1}}
\def\limn{\lim_{n\to\infty}}
\def\limsupn{\limsup_{n\to\infty}}
\def\liminfn{\liminf_{n\to\infty}}
\def\weakto{\Rightarrow}
\def\Z{{\mathbb Z}}
\def\R{{\mathbb R}}
\def\N{{\mathbb N}}
\def\calA{\mathcal A}
\def\calB{\mathcal B}
\def\calF{\mathcal F}
\def\calG{\mathcal G}
\def\calJ{\mathcal J}
\def\calK{\mathcal K}
\def\calL{\mathcal L}
\def\calM{\mathcal M}
\def\calR{\mathcal R}
\def\calV{\mathcal V}
\def\calX{\mathcal X}
\def\topp#1{^{\scriptscriptstyle (#1)}}
\def\SM{{\rm SM}}
\def\USC{{\rm USC}}
\def\ddelta#1{\delta_{\pp{#1}}}
\def\CRSMa{\calM^{\rm Ca}_{\alpha,\calR}}
\def\CRSM{\calM^{\rm C}_{\alpha,\calR}}
\newtheorem{Thm}{Theorem}[section]
\newtheorem{Lem}[Thm]{Lemma}
\newtheorem{Prop}[Thm]{Proposition}
\theoremstyle{remark}
\newtheorem{Rem}[Thm]{Remark}
\newtheorem{Assump}[Thm]{Assumption}
\numberwithin{equation}{section}
\begin{document}\sloppy

\title[Choquet random sup-measures with aggregations]{
Choquet random sup-measures with aggregations}\date{\today}

\author{Yizao Wang}\address{Yizao Wang\\Department of Mathematical Sciences\\University of Cincinnati\\2815 Commons Way\\Cincinnati, OH, 45221-0025, USA.}\email{yizao.wang@uc.edu}

\begin{abstract}
A variation of Choquet random sup-measures is introduced. These random sup-measures are shown to arise as the scaling limits of empirical random sup-measures of a general aggregated model. 
Because of the aggregations, the finite-dimensional distributions of introduced random sup-measures do not necessarily have classical extreme-value distributions. 
Examples include the recently introduced stable-regenerative random sup-measures as a special case. 
\end{abstract}
\keywords{random sup-measure,random closed set, regular variation, point process, limit theorem, aggregated model}
\subjclass[2010]{Primary, 60G70, 
60F17; 
  Secondary, 60G57. 
   }

\maketitle


\NP

\section{Introduction}
\subsection{Background}For a general stationary sequence $\{X_i\}_{i\in\N}$, one is often interested in the asymptotic behavior of extremes. In the case that the random variables are i.i.d., it is well known that the global (macroscopic) asymptotic behavior of extremes is characterized by a two-dimensional Poisson point process.  
Assume that $\proba(X_1>x)= x^{-\alpha}L(x)$ with $\alpha>0$ and a slowly varying function $L$ at infinity (denoted by $\proba(X_1>x)\in RV_{-\alpha}$). It follows that for $a_n$ such that $\limn n\proba(X_1>a_n)= 1$ (implying $a_n\in RV_{1/\alpha}$), 
\equh\label{eq:PPP_convergence}
\summ i1n \ddelta{X_i/a_n,i/n}\weakto \sif \ell1\ddelta{\Gamma_\ell^{-1/\alpha},U_\ell},
\eque
in $\mathfrak M_p((0,\infty]\times[0,1])$, the space of Radon point measures on $(0,\infty]\times[0,1]$, where on the right-hand side $\{\Gamma_\ell\}_{\ell\in\N}$ are consecutive arrival times of a standard Poisson process, $\{U_\ell\}_{\ell\in\N}$ are i.i.d.~uniform random variables on $[0,1]$, and the two families are independent \citep{resnick87extreme}. 
Each pair $(\Gamma_\ell^{-1/\alpha},U_\ell)$ represent the magnitude and location of the $\ell$-th order statistic in the limit. 
We shall refer to every point in the limiting point process as an extreme for later discussions. We restrict the discussions to positive values only in introduction, for the sake of simplicity.

For a weakly dependent stationary sequence with the same marginal distribution as above, extremal clustering may occur, and with the same order of normalization a non-degenerate limit takes the form
\equh\label{eq:extremal_clustering}
\sif \ell1 \sum_{j\in\Z}\ddelta{\Gamma_\ell^{-1/\alpha}Q_{\ell,j},U_\ell}.
\eque
Here, for each $\ell\in\N$, $\{\Gamma_\ell^{-1/\alpha}Q_{\ell,j}\}_{j\in\Z}$ represents the magnitude of extremes belonging to the same cluster, and $\{Q_{\ell,j}\}_{j\in\Z},\ell\in\N$ are i.i.d.~copies of a sequence of non-negative random variables (it may have only a finite number of non-zero values), independent from $\{\Gamma_\ell^{-1/\alpha}\}_{\ell\in\N}$. 
Extremal clustering is a local feature here: it is referred to as the phenomena that if some $X_i$ from the original sequence takes a very large value, then with non-negligible probability, so are the few $X_j$ near $X_i$, which form a local cluster of extremes; when the time is scaled by $1/n$, the time indices of points in a cluster shrink to a single point in the limit. More precisely, convergence to clustering representation \eqref{eq:extremal_clustering} holds under the assumptions that $(X_1,\dots,X_k)$ has multivariate regular varying tails  with tail index $-\alpha$ for all $k\in\N$, the so-called anti-clustering assumption, and another mixing-type one. 
Many examples of stochastic processes and time series exhibiting extremal clustering are known. An extensive literature exists already on analysis for extremes of stationary sequences with weak dependence (e.g.~\citep{davis95point,davis98sample,basrak09regularly,basrak18invariance,dombry18tail,kulik20heavy}). 

For both cases above, the macroscopic behaviors of the extremes are the same, if one looks at the limit of {\em random sup-measures}. 
Recall that a sup-measure $m$ on $[0,1]$ is a set function on subsets of $[0,1]$, satisfying the relation $m(\bigcup_\lambda A_\lambda) = \sup_\lambda m(A_\lambda)$ for $A_\lambda\subset[0,1]$, and the law of a random sup-measure, say $\calM$, is uniquely determined by its finite-dimensional distributions over a suitable class $\calA$ of subsets of $[0,1]$ (e.g.~all open subsets). 
The convergence of empirical random sup-measures takes the form
\equh\label{eq:RSM_convergence}
\frac1{a_n}M_n(\cdot):=\frac1{a_n}\max_{k/n\in\cdot}X_k \weakto \calM(\cdot),
\eque
in the space of sup-measures.  
In the i.i.d.~case discussed earlier, with the same choice of $a_n$ as in \eqref{eq:PPP_convergence} 
the limit on the right-hand side above is an independently scattered $\alpha$-Fr\'echet random sup-measure on $[0,1]$ with Lebesgue control measure, denoted by $\calM^{\rm is}_\alpha$. This random sup-measure has the representation
\[
\calM_\alpha^{\rm is}(\cdot) \eqd \sup_{\ell\in\N}\frac1{\Gamma_\ell^{1/\alpha}}\inddd{U_\ell\in\cdot},
\]
sharing the same Poisson point process as on the right-hand side of \eqref{eq:PPP_convergence}. 
Alternatively, the law of $\calM^{\rm is}_\alpha$ is determined by the properties that it has $\alpha$-Fr\'echet marginal distribution ($\proba(\calM_\alpha^{\rm is}(A)\le x) = e^{-{\rm Leb}(A)x^{-\alpha}}$ for all open $A$ and $x>0$) and that it is independently scattered ($\{\calM_\alpha^{\rm is}(A_i)\}_{i=1,\dots,d}$ are independent for any disjoint collection $\{A_i\}_{i=1,\dots,d}$).  

In the presence of strong dependence, the extremes of a stationary sequence may have a completely different behavior at macroscopic level.
In a seminal work, \citet{obrien90stationary} characterized all possible limits of extremes that may arise and advocated the necessity of using  random sup-measures to characterize macroscopic extremes of random variables with strong dependence. However, not many representative examples of limit theorems were immediately known, and the paper \citep{obrien90stationary} did not attract enough attention until recently. 

A notable family of random sup-measures, the $\alpha$-Fr\'echet {\em Choquet random sup-measures} (CRSMs), appeared in recent investigations following \citep{obrien90stationary}. This family of $\alpha$-Fr\'echet random sup-measures were introduced by \citet{molchanov16max}. Recall that a random sup-measure $\calM$ is $\alpha$-Fr\'echet if $(\calM(A_1),\dots,\calM(A_d))$ has a multivariate $\alpha$-Fr\'echet distribution for all $A_1,\dots,A_d$.
An $\alpha$-Fr\'echet CRSM on $[0,1]$ has in addition 
the following representation:
\equh\label{eq:CRSM}
\calM_{\alpha,\calR}^{\rm C}(\cdot) \eqd \sup_{\ell\in\N}\frac1{\Gamma_\ell^{1/\alpha}}\inddd{\calR_\ell\cap\cdot\ne\emptyset},
\eque
where $\{\Gamma_\ell\}_{\ell\in\N}$ are as before, $\{\calR_\ell\}_{\ell\in\N}$ are i.i.d.~copies of a random closed set $\calR$ taking values from $\calF([0,1])$ (the space of closed subsets of $[0,1]$), and the two families are independent. 
Then, in view of the magnitude-location interpretation, 
the extremes with magnitude $\Gamma^{-1/\alpha}_\ell$ may appear at multiple (possibly infinite) locations, recorded in $\calR_\ell$. These statistics are summarized in the Poisson point process
\equh\label{eq:Choquet}
\sif \ell1 \ddelta{\Gamma_\ell^{-1/\alpha},\calR_\ell},
\eque
this time in $\mathfrak M_p((0,\infty]\times\calF([0,1]))$.
We shall refer to the random sup-measure \eqref{eq:CRSM} as an $(\alpha,\calR)$-CRSM (with a little abuse of notation, it depends only on the law of $\calR$ instead of a random closed set on some probability space). 
Note that CRSM includes independently scattered random sup-measures as a special case by taking $\calR_\ell = \{U_\ell\}$ with $U_\ell$ as in the previous representation. In the case that each $\calR_\ell$ has finitely many points with probability one, denoted by $\calR_\ell=\{V_{\ell,j}\}_j$, \eqref{eq:Choquet} has the following counterpart in $\mathfrak M_p((0,\infty]\times[0,1])$,
\equh\label{eq:Choquet1}
\sif \ell1\sum_j\ddelta{\Gamma_\ell^{-1/\alpha},V_{\ell,j}}.
\eque
In contrast to \eqref{eq:extremal_clustering}, this time $\{(\Gamma_\ell^{-1/\alpha},V_{\ell,j})\}_j$ form a cluster of extremes that appear at different macroscopic time locations. When $\calR$ has infinitely many points, such a point process \eqref{eq:Choquet1} will have infinitely many points in compact intervals and hence it is not Radon, and standard tools for point-process convergence do not apply. In this case it is natural to work with \eqref{eq:Choquet} with limit theorems stated in terms of random sup-measures as in \eqref{eq:RSM_convergence}. It is worth keeping in mind that when the limit of extremes can be represented as a point process as \eqref{eq:Choquet1}, the point-process approach should be preferred to the random-sup-measure one. The main reason to work with random sup-measures here is to be able to deal with examples with $\calR$ as random fractals.  See Remark \ref{rem:comparison} for more discussions.

\subsection{Overview of main results}
The phenomena that the same magnitude may appear at multiple locations in the macroscopic limit is referred to as {\em long-range clustering}, for which CRSMs provide a natural framework (unless $\calR$ is a random singleton). Limit theorems for CRSMs  have just appeared recently in \citep{durieu18family,lacaux16time}. 
We continue the recent investigations on CRSMs, and our contribution is twofold.
\begin{enumerate}[(a)]
\item First,  we introduce a family of random sup-measures that can be viewed as a variation of the CRSMs. We refer to this new family as {\em Choquet random sup-measures with aggregations}. 
\item Second, we provide a general aggregated model, of which the empirical random sup-measure scales to a CRSM, possibly with aggregations. 
\end{enumerate}

The CRSM with aggregations has  the following representation. Let $\{(\Gamma_\ell,\calR_\ell)\}_{\ell\in\N}$ be as before, and write $\calR_J := \bigcap_{j\in J}\calR_j$. Then, 
\equh\label{eq:CRSMa}
\calM_{\alpha,\calR}^{\rm Ca}(\cdot) \eqd 
\sup_{
J\subset \N, \calR_J\cap\cdot\ne\emptyset}\sum_{j\in J}\frac1{\Gamma_j^{1/\alpha}}, \eque
is referred to as an $(\alpha,\calR)$-CRSM with aggregations. (Note that $\calR_J \ne\emptyset$ is allowed with $J\subset\N$ having infinite cardinality, as a random sup-measure may take $+\infty$ value. However we shall exclude such a possibility later. See Remark \ref{rem:assump}, \eqref{item:2}.) In this case the point process
\equh\label{eq:CRSMa_PP}
\sum_{J\subset\N:\calR_J\ne\emptyset}
\ddelta{\sum_{j\in J}\Gamma_j^{-1/\alpha},\calR_J}
\eque
records all the magnitudes and locations of extremes. Aggregation here refers to the fact that at intersections, the extreme value is the sum of more than one value of $\Gamma_j^{-1/\alpha}$. 
If the random closed sets do not intersect with probability one, then $\CRSM \eqd\CRSMa$. The interesting case is when they intersect with strictly positive probability. In this case, $\CRSMa$ is no longer $\alpha$-Fr\'echet. In addition, we shall restrict to the case that $\calR$ is light (i.e., for every fixed $t$, $\proba(t\in\calR) = 0$). For the case that $\calR$ is not light, the CRSM with aggregations is of a different nature, and will not be considered here.

After the introduction of CRSM with aggregations, in the second part of the paper, we shall provide a simple aggregation framework that leads to these random sup-measures in the limit. 
We are interested in random sup-measures on a subset $E\subset[0,1]$ (we shall have examples with $E = (0,1]$ and $(0,1)$). 
Let $\indn X$ be i.i.d.~random variables with $\proba(X_1>x)\sim \mathsf p\proba(|X_1|>x)\in RV_{-\alpha}$ for some $\mathsf p\in(0,1]$.  Let $\indn R$ be a sequence of random closed sets such that 
$R_n\weakto \calR$
in $\calF(E)$ ($\weakto$ as convergence in distribution for random elements in the corresponding metric space \citep{billingsley99convergence}). 
For each $n$, let $\{R_{n,j}\}_{j\in\N}$ be i.i.d.~copies of $R_n$ taking values from $\{0,\dots,n\}/n$, independent from $\{X_j\}_{j\in\N}$. 
We shall consider a triangular-array model, and show that
the following convergence of empirical random sup-measure
\[
 \frac1{a_n}\max_{k/n\in \cdot}\summ j1{m_n}X_j\inddd{k/n\in R_{n,j}}\weakto \calM_{\alpha,\calR}^{\rm Ca}(\cdot),
\]
 as $n\to\infty$, for appropriately chosen $a_n$ and $m_n$. The key assumptions leading to the desired convergence are that $\calR$ is light and that
\[
\bigcap_{j\in J}R_{n,j}\weakto \bigcap_{j\in J}\calR_j \mbox{ for every $J\subset\N$ fixed.}
\]
All other assumptions are mild. 
Our examples include old and new CRSMs with and without aggregations.

\subsection{Comments}
We conclude the introduction with a few comments on our results. 
\begin{Rem}The motivating example for us came from the recent paper \citep{samorodnitsky19extremal}. Therein, the first example of CRSMs with aggregations was introduced,  where $\calR$ is a {\em randomly shifted $\beta$-stable regenerative set}, denoted as $\calR_\beta^{\rm srs}$ here and to be recalled in Section \ref{sec:srs}. Non-trivial intersections of  independent random closed sets occur and hence the CRSM is with aggregation.  
It was shown in \citep{samorodnitsky19extremal} that $\calM^{\rm Ca}_{\alpha,\calR_\beta^{\rm srs}}$ arises as the scaling limit of the empirical random sup-measure of a family of stationary processes with regularly-varying tails and long-range dependence, first introduced in \citep{rosinski96classes}. 
The long-range dependence in this example is intrinsically related to the underlying randomly shifted stable-regenerative sets $\calR_\beta^{\rm srs}$: such random closed sets (their corresponding local times resp.) arise recently in extremal (central resp.) limit theorems for stochastic processes introduced in \citep{rosinski96classes} and their variations \citep{owada15functional,lacaux16time,bai20functional}. A new representation of Hermite processes has also been introduced recently based on stable-regenerative sets \citep{bai20representations}. 

The original motivation of this paper was to understand the underlying mechanism of the process in \citep{samorodnitsky19extremal} that leads this remarkable family of random sup-measures $\calM_{\alpha,\calR_\beta^{\rm srs}}^{\rm Ca}$. Remark \ref{rem:motivation} explains how our limit theorem sheds light on the underlying dynamics with long-range dependence.
\end{Rem}

\begin{Rem}
In Section \ref{sec:srs} a couple examples of limit theorems for CRSMs with aggregations are provided, and both are essentially related to stable-regenerative sets. The framework proposed in this paper is a little unsatisfactory in the sense that we are unaware of any other examples leading to CRSMs with aggregations where $\calR$ is of {\em a different type of random fractals}. It is an interesting question how, and what type of, other random fractals may arise naturally from a discrete-time stochastic  model in general, and the answer to this question should lead to new examples of CRSMs with aggregations.
\end{Rem}

\begin{Rem}\label{rem:comparison}
Random sup-measures are convenient at providing a unifying framework for limit theorems for extremes. In order to include the example with $\calR$ as a stable-regenerative set, which is the most interesting to us, we choose to use random closed sets representing the locations of points from each cluster (and we have explained why \eqref{eq:Choquet1} would not work), and then more generally to state limit theorems in terms of random sup-measures. Note that the point-process convergence still plays a crucial role; see Proposition \ref{prop:top} and Lemma \ref{lem:top_PP}.

On the other hand, when $\calR$ consists of finite number of points (e.g.~the Karlin random sup-measures introduced recently in \citep{durieu18family}, see Section \ref{sec:Karlin}), working with random sup-measures is unnecessary and actually may provide less information on the extremes. In particular, one should keep in mind that while random sup-measures are useful at characterizing global dependence structures of extremes, they do not reveal any local clustering dependence structure. Therefore when restricted to such a case, working with point processes as in \eqref{eq:Choquet1} (or more sophisticated formulations of point processes for local clustering, e.g.~\citep{basrak09regularly,kulik20heavy})  yields stronger results in principle.
\end{Rem}
\begin{Rem}It is well known that aggregation schemes are one of the main resources that lead to long-range dependence \citep{samorodnitsky16stochastic,pipiras17long}.  
Our model has a flavor of random walks in random sceneries (see e.g.~\citep{cohen06random,dombry09discrete,treszczotko18random,cohen09convergence}).
Similar aggregated models have also been investigated in queueing theory with long-range dependence \citep{kaj08convergence,mikosch07scaling}. However, very few references can be found in the literature on extremes of aggregated models of the type considered here. 
\end{Rem}

The paper is organized as follows. Section \ref{sec:RSM} introduces CRSM with aggregations. Section \ref{sec:model} introduces the aggregated model and shows that its empirical random sup-measures scales to the CRSM with aggregations. Various examples are provided in Section \ref{sec:example}. 

\NP
\section{CRSM with aggregations} \label{sec:RSM}
Our references for random sup-measures are \citet{obrien90stationary,vervaat97random}, and for random closed sets  \citet{molchanov17theory}. 
 We only provide the minimum background in Section \ref{sec:background} without full details.
We then provide three representations for CRSM with aggregations in Section \ref{sec:three}. The results of this section are straightforward consequences of standard background. 

\subsection{Background}\label{sec:background}
Throughout, consider $E\subset[0,1]$ such that $[0,1]\setminus E$ has at most finite number of points (or empty), with the topology induced by the Euclidean metric.  For all our examples in Section \ref{sec:example}, $E = [0,1], (0,1]$ or $(0,1)$.  
Let $\calF = \calF(E), \calG = \calG(E),\calK = \calK(E)$ denote the collection of closed, open and compact subsets of $E$, respectively. 

Let $\calF(E)$ be equipped with the Fell topology. A basis of the Fell topology is $\calF^K_{G_1,\dots,G_d} = \{F\in\calF(E): F\cap K = \emptyset, F\cap G_i \ne\emptyset\}$ for $K\in\calK$ and $d\in\N,  G_1,\dots,G_d\in\calG$. The Fell topology is metrizable for locally compact second countable Hausdorff and hence our choice of $E$. The Borel $\sigma$-algebra $\calB(\calF(E))$ of $\calF(E)$ is generated by the sets $\calF_K = \{F\in\calF(E):F\cap K\ne\emptyset$ for all $K\in\calK$. Random closed sets are measurable mappings from certain probability space to  $(\calF(E),\calB(\calF(E)))$. The law of a random closed set is determined by the capacity functional evaluated at all finite unions of elements from a {\em separating class}, for which one may take 
\equh\label{eq:G_0}
\calG_0:=\ccbb{(a,b):[a,b]\subset E}.
\eque
That is, the law is a random closed set $\calR$ is uniquely determined by the probabilities $\proba(\calR\cap A_i\ne\emptyset, i=1,\dots,d)$ for all $d\in\N,A_1,\dots,A_d\in\calG_0$. If $\{R_n\}_{n\in\N}$ and $\calR$ are random closed sets in $\calF(E)$ that in addition $\proba(\calR\cap A\ne \emptyset) = \proba(\calR\cap \wb A\ne\emptyset)$ for all $A\in\calG_0$, then $R_n\weakto \calR$ if 
$\limn\proba(R_n\cap A_i\ne\emptyset, i=1,\dots,d) = \proba(\calR\cap A_i\ne\emptyset,i=1,\dots,d)$ for all $d\in\N, A_1,\dots,A_d\in\calG_0$ \citep[Corollary 1.7.14]{molchanov17theory}.

A sup-measure $m$ on $E$ taking values in $\wb\R = [-\infty,\infty]$ is a set function on all subsets of $E$, satisfying 
\[
m\pp{\bigcup_{\lambda\in\Lambda}A_\lambda} = \sup_{\lambda\in\Lambda}m(A_\lambda), 
\]
for all collections of subsets $\{A_\lambda\}_{\lambda\in\Lambda}$ of $E$, and $m(\emptyset) = -\infty$. We let $\SM\equiv \SM(E,\wb\R)$ denote the space of all $\wb\R$-valued sup-measures on $E$. 
Again, for our choice of $E$ every $m\in\SM$ is uniquely determined by the values of $m$ over a subbase of the topology of $E$, for example by $\{m(G)\}_{G\in\calG_0}$. 
The space $\SM$ is equipped with the so-called sup vague topology. In this topology, $m_n\to m$ as $n\to\infty$, if
\begin{align*}
\limsupn m_n(K) & \le m(K), \mfa K\in\calK,\\
\liminfn m_n(G) & \ge m(G), \mfa G\in\calG.
\end{align*}
With the sup vague topology, $\SM$ is compact and Polish \citep{norberg86random}. So a random sup-measure is a measurable mapping from certain probability space to $(\SM,\calB(\SM))$ ($\calB(\SM)$ as the Borel $\sigma$-algebra of $\SM$). In particular, two random sup-measures are equal in distribution if they have the same finite-dimensional distributions when evaluated over a {\em probability-determining class}, of which $\calG_0$ in \eqref{eq:G_0} is an example \citep[Theorem 11.5]{vervaat97random}. 
That is, for a random sup-measure $M(\cdot)$, one may simply view it as a set-indexed stochastic process, the law of which is uniquely determined by $\{M(G)\}_{G\in\calG_0}$ with $\calG_0$ in \eqref{eq:G_0}.

We need two notions of convergence of random sup-measures. The first is the pointwise convergence. Let $\{M_n\}_{n\in\N}$ be a family of increasing random sup-measures as measurable mappings from certain $(\Omega,\calA,\proba)$ to $(\SM, \calB(\SM))$. 
We write $M_n(G)$ as the random sup-measure evaluated at $G$ and $M_n(\omega,G)$ when we want to emphasize the dependence on $\omega\in\Omega$. Here, by an increasing family we mean that for all $m\le n, G\in\calG, \omega\in\Omega$, $M_m(\omega,G)\le M_n(\omega,G)$. We shall then understand
\[
\sup_{n\in\N}M_n
\]
as a random element in SM (a measurable mapping from $(\Omega,\calA,\proba)$ to $\SM$) defined as follows. 
For {every $\omega\in\Omega$ fixed}, $M(\omega,G):=\sup_{n\in\N} M_n(\omega,G), G\in\calG$ defines a sup-measure $M(\omega)\equiv M(\omega,\cdot)\in\SM$, and $M_n(\omega)\to M(\omega)$ in $\SM$ (with respect to the sup vague topology) as $n\to\infty$ \citep[Theorem 6.2]{vervaat97random}. Since the space $\SM$ is metrizable and every $M_n$ is measurable, it follows that $\omega\mapsto M(\omega)$ is measurable \citep[Lemma 1.10]{kallenberg97foundations}. We shall set $\sup_{n\in\N}M_n:= M$ as the pointwise limit.
Second, a sequence of random sup-measures $\{M_n\}_{n\in\N}$ {\em converges in distribution} to another random sup-measure $M$ in $\SM$, if 
\[
\pp{M_n(G_1),\dots,M_n(G_d)}\weakto \pp{M(G_1),\dots,M(G_d)}
\]
for all $d\in\N$ and $G_1,\dots,G_d$ from any {\em convergence-determining class} (again we may take $\calG_0$) such that 
$M(G_i) = M(\wb G_i), i=1,\dots,d$ almost surely  \citep[Theorem 12.3]{vervaat97random}.

\subsection{Three representations}\label{sec:three}
We provide three different representations for $(\alpha,\calR)$-CRSM with aggregations. Throughout, $\alpha>0$, $\calR$ is a random closed set taking values from $\calF(E)$ and  $\{\calR_\ell\}_{\ell\in\N}$ are i.i.d.~copies of $\calR$, independent from $\{\Gamma_j\}_{j\in\N}$, which always represents a collection of consecutive arrival times of a standard Poisson process. 
Introduce
\equh\label{eq:M_J}
\calM_J(\cdot)\equiv \calM_{\alpha,\calR,J}(\cdot):= 
\begin{cases}
\displaystyle \sum_{j\in J}\frac1{\Gamma_j^{1/\alpha}} & \mbox{ if } \calR_J\cap\cdot\ne\emptyset,\\
-\infty & \mbox{ otherwise,}
\end{cases}
\eque
with $\calR_J:=\bigcap_{j\in J}\calR_j, J\subset\N, |J|<\infty$,
and $\calM_{\emptyset}(\cdot) \equiv -\infty$. In this section, except for $\CRSMa$ we do not write explicitly the dependence of random sup-measures on $\alpha,\calR$ most of the time.
{The $(\alpha,\calR)$-CRSM with aggregations} is defined as
\begin{equation}\label{eq:RSM2}
 \CRSMa:=\sup_{J\subset\N,|J|<\infty}\calM_{J}.
\end{equation}
We shall assume that almost surely, $\calR_J = \emptyset$  (hence $\calM_J\equiv -\infty$) for $|J|$ large enough. This assumption is a consequence of our Assumption \ref{assump:Rn} later; see Remark \ref{rem:assump}, (ii).  
Here, the supremum in \eqref{eq:RSM2} is understood as the pointwise limit 
of $\calM_\ell:=\max_{J\subset[\ell]}\calM_J$
as $\ell\to\infty$. 
Throughout, we write $[\ell] = \{1,\dots,\ell\}$. 
\begin{Rem}\label{rem:RSM}
It might be convenient to introduce the following special indicator function,  for any event $A$, 
\equh\label{eq:indicator*}
{\bf 1}^*_A:=\begin{cases}
\displaystyle 1 & \mbox{on the event $A$} \\
 -\infty & \mbox{otherwise.}
\end{cases}
\eque
In this way, one may write $\calM_J = (\sum_{j\in J}\Gamma_j^{-1/\alpha})\inddds{\calR_J\cap\cdot\ne\emptyset}$ in \eqref{eq:M_J}
 with the convention $0\cdot(-\infty) = -\infty$. 

Allowing random sup-measures to take $-\infty$ may cause some confusion for the first time. If one is interested in $\CRSMa$ alone but not the limit theorem, then it suffices to consider its representations in $\SM(E,\wb\R_+)$, which is simpler by using the ordinary indicator functions: $\CRSMa(\cdot) = \sup_{J\subset\N, |J|<\infty}(\sum_{j\in J}\Gamma_j^{-1/\alpha})\inddd{\calR_J\cap\cdot\ne\emptyset}$.  However, we need to work with $\SM(E,\wb \R)$ for our limit theorems later (Proposition \ref{prop:RSM3} and Theorem \ref{thm:1}),
as random sup-measures of interest may take negative values. 
\end{Rem}

The second representation plays a crucial role in our limit theorem. 
In the sequel, let $\{\varepsilon_\ell\}_{\ell\in\N}$ be i.i.d.~random variables with  
\[
\proba(\varepsilon_1 = 1) =\mathsf p = 1-\proba(\varepsilon_1= -1), \mbox{ for some } \mathsf p\in(0,1],
\]
 independent from $\{(\Gamma_\ell,\calR_\ell)\}_{\ell\in\N}$.  
Introduce
\begin{multline}\label{eq:M_l,epsi}
\calM_\ell\topp {\mathsf p}(\cdot):= \max_{J\subset[\ell]}\calM_{J}\topp{\mathsf p}(\cdot)\\\qmwith \calM_{J}\topp{\mathsf p}(\cdot):= 
\begin{cases}
\displaystyle\sum_{j\in J}\frac{\varepsilon_j}{\Gamma_j^{1/\alpha}} & \mbox{ if } \calR_J\cap\cdot\ne\emptyset,\\
-\infty & \mbox{ otherwise,}
\end{cases}
J\subset\N, |J|<\infty.
\end{multline}
\begin{Prop}\label{prop:RSM3}With $\CRSMa$ as in \eqref{eq:RSM2} and $\calM_\ell\topp{\mathsf p},\calM_{J}\topp{\mathsf p}$ in \eqref{eq:M_l,epsi}, we have
\[
\sup_{J\subset\N, |J|<\infty}\calM_{J}\topp{\mathsf p} \eqd \mathsf p^{1/\alpha} \CRSMa \qmand \calM_\ell\topp{\mathsf p}\weakto \mathsf p^{1/\alpha} \CRSMa
\]
as $\ell\to\infty$.
\end{Prop}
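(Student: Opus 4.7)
My plan is to reduce the signed supremum to one over positive-sign indices and then invoke Poisson thinning. The key observation is a monotonicity: for any finite $J\subset\N$ and any $A\subset E$, if some index $j_0\in J$ has $\varepsilon_{j_0}=-1$, then $J':=J\setminus\{j_0\}$ satisfies $\calR_{J'}\supset\calR_J$, so the constraint $\calR_{J'}\cap A\ne\emptyset$ is preserved whenever $\calR_J\cap A\ne\emptyset$, while the signed sum strictly increases by $\Gamma_{j_0}^{-1/\alpha}$. Iterating this sign-pruning $\omega$-by-$\omega$ and simultaneously for every $A$ yields
\[
\sup_{J\subset\N,\,|J|<\infty}\calM_J^{(\mathsf p)}(\cdot) \;=\; \sup_{J\subset I,\,|J|<\infty}\left(\sum_{j\in J}\Gamma_j^{-1/\alpha}\right)\inddds{\calR_J\cap\cdot\ne\emptyset},
\]
where $I:=\{j\in\N:\varepsilon_j=1\}$; the right-hand side involves only $+1$ signs.

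With this reduction the first claim becomes a distributional identity for the marked point process indexed by $I$. Since $\{\varepsilon_j\}$ is i.i.d.\ Bernoulli$(\mathsf p)$ and independent of $\{(\Gamma_j,\calR_j)\}_{j\in\N}$, independent $\mathsf p$-thinning of the Poisson point process on $(0,\infty)\times\calF(E)$ with intensity $\mathrm dt\otimes\proba(\calR\in\cdot)$ gives
\[
\sum_{j\in I}\ddelta{\Gamma_j,\calR_j}\;\eqd\;\sum_{k\in\N}\ddelta{\Gamma_k/\mathsf p,\calR_k},
\]
with $\{\Gamma_k\}$ the standard rate-$1$ Poisson arrival times and $\{\calR_k\}$ i.i.d.\ copies of $\calR$ independent of them. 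Since $(\Gamma_k/\mathsf p)^{-1/\alpha}=\mathsf p^{1/\alpha}\Gamma_k^{-1/\alpha}$, substituting into the display above identifies the reduced sup-measure with $\mathsf p^{1/\alpha}\CRSMa$ in distribution, which is the first half of the proposition.

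For the weak-convergence half I invoke monotonicity in $\SM$. Because $\calM_\ell^{(\mathsf p)}=\max_{J\subset[\ell]}\calM_J^{(\mathsf p)}$ is a maximum over a family that grows with $\ell$, for every $\omega$ and every $G\in\calG$ the sequence $\ell\mapsto\calM_\ell^{(\mathsf p)}(\omega,G)$ is nondecreasing and converges to $\sup_{J}\calM_J^{(\mathsf p)}(\omega,G)$. By \citep[Theorem 6.2]{vervaat97random}, already recalled in Section \ref{sec:background}, this pointwise monotone convergence upgrades to convergence in the sup-vague topology almost surely, and since $\SM$ is Polish, a.s.\ convergence implies convergence in distribution. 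Combined with the first half this gives $\calM_\ell^{(\mathsf p)}\weakto\mathsf p^{1/\alpha}\CRSMa$.

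The step requiring most care is the sign-pruning reduction: one must check that the $\omega$-by-$\omega$ deletion of negative indices yields the same sup-measure (not merely the same value at a single $A$), and that the convention $0\cdot(-\infty)=-\infty$ from \eqref{eq:M_l,epsi} really makes excluded indices inert. Performing the reduction at the level of the entire underlying marked Poisson process, before passing to sup-measures, sidesteps both concerns and makes the joint-in-$G$ identity automatic.
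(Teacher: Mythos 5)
Your proposal is correct and follows essentially the same route as the paper: prune the negative-sign indices to reduce the signed supremum to the positive-only one, identify the positive-sign marked points as a $\mathsf p$-thinning (hence a $\mathsf p^{1/\alpha}$ rescaling in distribution), and deduce the convergence statement from the monotone pointwise limit via the sup-vague topology. The only caveat is that your sign-pruning identity does not hold $\omega$-by-$\omega$ but only almost surely (on a null event an all-negative $J$ could meet an open set met by no positive-sign $\calR_j$, making the left side finite and the right side $-\infty$); the paper handles this by checking equality on the countable class $\calG_0^{\mathbb Q}$ and noting that for each such $G$ either both sides are $-\infty$ a.s.\ or both are a.s.\ nonnegative.
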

\begin{proof}
Since $\sup_{J\subset\N,|J|<\infty}\calM_J\topp{\mathsf p}$ is the pointwise limit of $\calM_\ell\topp{\mathsf p}$ as $\ell\to\infty$, it suffices to prove the first part. We first introduce 
\[
\calM_J\topp+ := \begin{cases}
\calM_J\topp{\mathsf p} & \mbox{if } \varepsilon_j = 1, \forall j\in J,\\
-\infty & \mbox{otherwise,}
\end{cases}
\]
and $\calM_{\emptyset}\topp+ \equiv-\infty$.  
 Then, it is clear that
\equh\label{eq:thinning_M}
\sup_{J\subset\N,|J|<\infty}\calM_J\topp+ \eqd  \mathsf p^{1/\alpha}\sup_{J\subset\N,|J|<\infty}\calM_J.
\eque
Indeed, this follows from the thinning property that
\[
\ccbb{\pp{\Gamma_j^{-1/\alpha},\calR_j}}_{j\in\N, \varepsilon_j = 1}\eqd \ccbb{\pp{\mathsf p^{1/\alpha}\Gamma_j^{-1/\alpha},\calR_j}}_{j\in\N},
\]
and the observation that each side of \eqref{eq:thinning_M} is the same deterministic functional of the Poisson point process on the corresponding side above. 

It remains to show
\equh\label{eq:thinning_M2}
\sup_{J\subset\N,|J|<\infty}\calM_{J}\topp{\mathsf p} = \sup_{J\subset\N,|J|<\infty}\calM_J\topp+, \mbox{ almost surely.}
\eque
Recall that both sides are understood as limits of increasing random sup-measures, which are defined for all $\omega\in\Omega$. Now, we shall show that both sides coincide on a countable family of open intervals, and for this purpose it suffices to consider 
\equh\label{eq:GQ}
\calG_0^{\mathbb Q}:=\{(a,b)\in\calG_0:a,b\in\mathbb Q\},
\eque and  prove that the two random sup-measures of interest take the same value for every $G\in\calG_0^{\mathbb Q}$ almost surely. 
Observe that if $\proba(\calR\cap G\ne\emptyset) = 0$, then almost surely both sides above are $-\infty$. So consider $G$ such that $\proba(\calR\cap G\ne\emptyset)>0$, then both sides are almost surely non-negative. 
For any finite $J\subset\N$ introduce $J_+\equiv J_+(\{\varepsilon_j\}_{j\in J}):=\{j\in J:\varepsilon_j = 1\}$. 
Then $\calM_{J}\topp{\mathsf p}(G)\le \calM_J\topp+(G)$, with the equality holds if and only if $J_+ = J$. 
So the inequality `$\le$' of \eqref{eq:thinning_M2} is trivial.
For the inequality `$\ge$', it is clear that $\calM_{J_+}\topp{\mathsf p} = \calM_J\topp+$. This completes the proof.
\end{proof}

The last representation makes use of upper-semi-continuous functions. 
While such representations are commonly used in the literature, they are not as convenient when working with CRSMs with aggregations, and we do not need them for the proof.  
Recall that for a sup-measure $m\in \SM(E,\wb \R)$, it is related to the corresponding upper-semi-continuous function $f$ by the sup-derivative and sup-integral operators $d^\vee$ and $i^\vee$, respectively,
\begin{align*}
d^\vee: \SM(E,\wb \R) \to \USC (E,\wb\R)& \qmwith (d^\vee m)(t) = \inf_{G\in\calG, G\ni t}m(G),\\
i^\vee: \USC(E,\wb \R) \to \SM (E,\wb\R)& \qmwith (i^\vee f)(G) = \sup_{t\in E,t\in G} f(t),
\end{align*}
where $\USC(E,\wb\R)$ denote the space of upper-semi-continuous functions on $E$ taking values in $\wb \R$. 
The two operators are the inverse of each other and the induced relation is a  homeomorphism between $\SM(E,\wb\R)$ and $\USC(E,\wb\R)$. 
Introduce
\[
\calJ_t\equiv \calJ_t(\{\calR_\ell\}_{\ell\in\N}):= \ccbb{\ell\in\N:t\in \calR_\ell}, \quad \xi_\alpha(t):=
\begin{cases}
\displaystyle \sum_{j\in \calJ_t}\frac1{\Gamma_j^{1/\alpha}} & \mbox{ if } \calJ_t\ne\emptyset,\\
-\infty & \mbox{ otherwise,}
\end{cases}
~ t\in E.
\]
(Using \eqref{eq:indicator*}, one can write $\xi_\alpha(t) = (\sum_{j\in\calJ_t}\Gamma_j^{-1/\alpha})\inddds{\calJ_t\ne\emptyset}$.)
It is straightforward to verify that $\xi_\alpha\in\USC(E,\wb\R)$ almost surely.
\begin{Prop}
With notations above,
\equh\label{eq:RSM}
\CRSMa(\cdot) = \pp{i^\vee \xi_\alpha}(\cdot) \equiv \sup_{t\in\cdot}\xi_\alpha(t) \mbox{ almost surely.}
\eque
\end{Prop}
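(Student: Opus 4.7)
The plan is to show that the two random sup-measures on each side of \eqref{eq:RSM} agree on every open interval in the countable class $\calG_0^{\mathbb Q}$ of \eqref{eq:GQ} almost surely. Since both sides are elements of $\SM(E,\wb\R)$ and such sup-measures are uniquely determined by their values on $\calG_0$ (and hence on $\calG_0^{\mathbb Q}$), this will upgrade to almost sure equality in $\SM$. Before starting, I would invoke the standing assumption stated just after \eqref{eq:RSM2} that $\calR_J=\emptyset$ for $|J|$ large enough almost surely: as noted earlier, this implies that almost surely $|\calJ_t|<\infty$ for every $t\in E$ (if $|\calJ_t|=\infty$, then $\calR_J\ni t$ for every finite $J\subset\calJ_t$, contradicting the assumption). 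In particular $\calM_{\calJ_t}$ is a well-defined building block of $\CRSMa$.

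Fix $G\in\calG_0^{\mathbb Q}$ and work on the almost sure event above. For the inequality $\CRSMa(G)\le \sup_{t\in G}\xi_\alpha(t)$, take any finite $J\subset\N$ with $\calR_J\cap G\ne\emptyset$ (otherwise $\calM_J(G)=-\infty$, contributing nothing), pick any $t\in \calR_J\cap G$; since then $J\subset\calJ_t$ and all summands $\Gamma_j^{-1/\alpha}$ are non-negative,
\[
\calM_J(G)=\sum_{j\in J}\Gamma_j^{-1/\alpha}\le \sum_{j\in\calJ_t}\Gamma_j^{-1/\alpha}=\xi_\alpha(t)\le \sup_{s\in G}\xi_\alpha(s).
\]
Taking the supremum over $J$ gives $\CRSMa(G)\le \sup_{t\in G}\xi_\alpha(t)$. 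For the reverse inequality, let $t\in G$ be such that $\calJ_t\ne\emptyset$ (otherwise $\xi_\alpha(t)=-\infty$); setting $J=\calJ_t$, which is finite almost surely, we have $t\in\calR_J\cap G$, so $\calM_{\calJ_t}(G)=\sum_{j\in\calJ_t}\Gamma_j^{-1/\alpha}=\xi_\alpha(t)$, whence $\xi_\alpha(t)\le \CRSMa(G)$; the $-\infty$ case is trivial. Taking the supremum over $t\in G$ yields $\sup_{t\in G}\xi_\alpha(t)\le \CRSMa(G)$.

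Having established pointwise equality on $\calG_0^{\mathbb Q}$ off a null set (a countable union of null sets is null), one concludes using the background recalled in Section \ref{sec:background} that $\calG_0$ is a probability-determining class for $\SM(E,\wb\R)$ and that both $\CRSMa$ and $i^\vee\xi_\alpha$ are measurable maps into $\SM$; since $\xi_\alpha\in\USC(E,\wb\R)$ almost surely, $i^\vee\xi_\alpha$ is an honest sup-measure, and agreement on $\calG_0^{\mathbb Q}$ forces agreement as elements of $\SM$. There is no real obstacle here — the proof is a direct set-theoretic identification, and the only subtlety is the book-keeping of $-\infty$ values and ensuring the assumption that $\calJ_t$ is almost surely finite so that the choice $J=\calJ_t$ is admissible in the definition \eqref{eq:RSM2}.
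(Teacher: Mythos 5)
Your proof is correct and follows essentially the same route as the paper: fix $G\in\calG_0^{\mathbb Q}$ and establish the two inequalities by passing between a finite $J$ with $\calR_J\cap G\ne\emptyset$ and a point $t\in\calR_J\cap G$ with $J\subset\calJ_t$, then use that values on this countable class determine the sup-measure. Your extra remark that the standing assumption forces $|\calJ_t|<\infty$ almost surely (so $J=\calJ_t$ is admissible in \eqref{eq:RSM2}) is a correct piece of bookkeeping that the paper leaves implicit in its ``Clearly'' step.
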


\begin{proof}
Let $\calM$ denote the right-hand side of \eqref{eq:RSM}. It suffices prove, 
\[
\calM(G) = \CRSMa(G)=\sup_{J\subset N,|J|<\infty}\calM_J(G) \mbox{ almost surely,} \mfa G\in\calG_0^{\mathbb Q}.
\]
Fix $G\in\calG_0^{\mathbb Q}$ as in \eqref{eq:GQ}. Clearly, for every $t\in G$, $\xi_\alpha(t)\le \CRSMa(G)$, whence $\calM(G)\le \CRSMa(G)$. On the other hand, for every $J\subset\N$, $|J|<\infty$ such that $\calM_J(G)\ne -\infty$, there exists $t\in G$ such that $t\in \calR_J$ and $\calM_J(G) = \sum_{j\in J}\Gamma_j^{-1/\alpha}$. But $t\in \calR_J$ also implies that $\sum_{j\in J}\Gamma_j^{-1/\alpha} \le\xi_\alpha(t)$. Thus $\calM(G)\ge \CRSMa(G)$. This completes the proof. 
\end{proof}

\NP
\section{A limit theorem for an aggregation framework}\label{sec:model}
We introduce a general aggregated model, of which the empirical random sup-measure scales to an $(\alpha,\calR)$-CRSM with aggregations. 
Let $\{X_j\}_{j\in\N}$ be i.i.d.~random variables and $\{R_{n,j}\}_{j\in\N}$ be i.i.d.~copies of certain random closed set $R_n$. 
 Write $R_{n,J}:=\bigcap_{j\in J} R_{n,j}$. Recall notations for $\{\calR_j\}_{j\in\N}$ and $\calR_J$ from the previous section \eqref{eq:M_J}.
Set
\[
p_n(k) := \proba(k/n \in R_n), k=0,\dots,n \qmand p_n(G) := \max_{k/n\in G}p_n(k).
\]
Throughout, $C$ denotes a strictly positive constant that may change from line to line.
We start with assumptions on $R_{n}$ and $\calR$. 
\begin{Assump}\label{assump:Rn}
Let $E$ be a subset of $[0,1]$ such that $[0,1]\setminus E$ has at most finite number of points. 
For each $n\in\N$,
\equh\label{eq:lattice}
R_n\subset\frac1n\{0,\dots,n\} \mbox{ almost surely.}
\eque
For all $\ell\in\N$,
\equh\label{eq:intersection_convergence}
R_{n,[\ell]}\weakto \calR_{[\ell]} \mbox{ in } \calF(E).
\eque
There exists $c_0>0$ such that for all $G\in\calG_0\equiv \calG_0(E)$, 
\equh\label{eq:p_n}
p_n(G)\le C n^{-c_0} \mfa n\in\N,
\eque
where the constant $C$ may depend on $G$.
\end{Assump}

Next, we assume
\equh\label{eq:RV}
\proba(|X_1|>x) = x^{-\alpha}L(x), \alpha>0 \qmand \lim_{x\to\infty}\frac{\proba(X_1>x)}{\proba(|X_1|>x)} = \mathsf p 
\mbox{ for some $\mathsf p\in(0,1]$},
\eque
where $L$ is a slowly varying function at infinity. 
Let $\{m_n\}_{n\in\N}$ denote an increasing sequence of positive integers, and 
\[
J_{n,k}:= \ccbb{j=1,\dots,m_n: k/n\in R_{n,j}}.
\]
The empirical random sup-measure of the aggregated model then is, 
\[
M_{n}(\cdot) := \max_{\substack{k\in\{0,\dots,n\}\\k/n\in \cdot, J_{n,k}\ne\emptyset}}\sum_{j\in J_{n,k}}X_j,
\]
with the convention $\max\emptyset = -\infty$. 
In view of random walks in random sceneries, we think of $\{X_j\}_{j\in\N}$ as rewards, $R_{n,j}$ as the collection of the times (normalized by $1/n$) when the reward $X_j$ is collected, and $m_n$ as the number of copies in the aggregation.
The main result of this paper is the following theorem.
\begin{Thm}\label{thm:1}
Assume that  Assumption  \ref{assump:Rn} and \eqref{eq:RV} hold, that   $m_n$ and $a_n$ satisfy
\equh\label{eq:a_n}
m_n\to\infty \qmand \limn m_n\proba(X_1>a_n) = 1,
\eque
and in addition that
\equh\label{eq:kappa}
m_n \le Cn^\kappa \mbox{ for some }\kappa\in\pp{0,\frac{c_0}{1-1/\alpha}} \mbox{ if } \alpha\ge 1.
\eque 
Then, 
we have
\[
\frac {M_n}{a_n}\weakto \CRSMa
\]
in $\SM(E,\wb\R)$, where $\calM_{\alpha,\calR}^{\rm Ca}$ is as in \eqref{eq:RSM2}.
\end{Thm}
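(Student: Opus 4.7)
The plan is to establish weak convergence in $\SM(E,\wb\R)$ by checking the joint convergence
\[
\pp{\frac{M_n(G_i)}{a_n}}_{i=1,\dots,d}\weakto \pp{\CRSMa(G_i)}_{i=1,\dots,d}
\]
for every $d\in\N$ and every $G_1,\dots,G_d$ drawn from the countable convergence-determining class $\calG_0^{\mathbb Q}$ of \eqref{eq:GQ}; lightness of $\calR$ will guarantee the required boundary condition $\CRSMa(G)=\CRSMa(\wb G)$ almost surely for each such $G$. By Proposition~\ref{prop:RSM3}, the target can be recast as convergence to $\mathsf p^{-1/\alpha}\sup_{J\subset\N,|J|<\infty}\calM_J^{(\mathsf p)}$, a reformulation naturally matched to the present setup: by \eqref{eq:RV} the signs of $X_j/a_n$ are asymptotically Bernoulli with parameter $\mathsf p$, so the signed aggregates $\calM_J^{(\mathsf p)}$ are precisely the objects that the point process of the rescaled $X_j$'s produces in the limit.

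The first substantive step is a joint point-process input. Classical Poisson convergence for regularly varying i.i.d.~variables, together with \eqref{eq:intersection_convergence} and independence of $\{X_j\}$ from $\{R_{n,j}\}$, yields
\[
\summ j1{m_n}\ddelta{X_j/a_n,R_{n,j}}\weakto \sif \ell1\ddelta{\mathsf p^{-1/\alpha}\varepsilon_\ell\Gamma_\ell^{-1/\alpha},\calR_\ell}
\]
in $\mathfrak M_p\bb{([-\infty,\infty]\setminus\{0\})\times\calF(E)}$, jointly with the finite-intersection convergences $R_{n,J}\weakto \calR_J$ inherited from \eqref{eq:intersection_convergence} for every $J\subset\N$ with $|J|<\infty$.

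Next, for fixed $\ell\in\N$ let $j_{n,1},\dots,j_{n,\ell}$ be the indices of the $\ell$ largest $|X_j|$'s among $j\le m_n$ and define the truncated empirical random sup-measure
\[
M_n^{(\ell)}(G):=\max_{\emptyset\ne J\subset\{j_{n,1},\dots,j_{n,\ell}\},\,R_{n,J}\cap G\ne\emptyset}\sum_{j\in J}X_j.
\]
A continuous-mapping argument on the finite projection of the point process above, using the lightness of $\calR$ to rule out boundary mass on $G\in\calG_0^{\mathbb Q}$, gives $M_n^{(\ell)}/a_n\weakto \mathsf p^{-1/\alpha}\calM_\ell^{(\mathsf p)}$, and Proposition~\ref{prop:RSM3} identifies the further limit in $\ell$ as $\CRSMa$.

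The main obstacle is to show that the residual $(M_n-M_n^{(\ell)})/a_n$ is asymptotically negligible as first $n\to\infty$ and then $\ell\to\infty$. Splitting each $X_j$ at threshold $\eta a_n$, it suffices to control the small-jump contribution
\[
W_n(G):=\max_{k/n\in G}\sum_{j\in J_{n,k}}X_j\indd{|X_j|\le\eta a_n}
\]
and then send $\eta\downarrow 0$. For $\alpha<1$ a direct first-moment estimate on $\sum_{j=1}^{m_n}|X_j|\indd{|X_j|\le \eta a_n}$, of order $m_n(\eta a_n)^{1-\alpha}$, together with $m_n\asymp a_n^\alpha$ up to slow variation, dominates $W_n$ by $O(\eta^{1-\alpha})a_n$, which is already sufficient with no extra constraint on $m_n$. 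For $\alpha\ge 1$ the expected small-jump value is non-negligible and one must center: the position-wise mean $m_n p_n(k)\,\esp[X_1\indd{|X_1|\le\eta a_n}]$ is of order $m_n n^{-c_0}/a_n\asymp n^{\kappa(1-1/\alpha)-c_0}$ after invoking \eqref{eq:p_n} and $a_n\asymp m_n^{1/\alpha}$, and the constraint $\kappa<c_0/(1-1/\alpha)$ in \eqref{eq:kappa} is calibrated exactly so that this tends to zero; the centered fluctuations are then handled by a Chebyshev estimate using \eqref{eq:p_n} and $|J_{n,k}|\sim{\rm Bin}(m_n,p_n(k))$ together with the truncated second moment $\esp\bb{(X_1\indd{|X_1|\le\eta a_n})^2}\sim C(\eta a_n)^{2-\alpha}$ (or boundedness for $\alpha\ge 2$). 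Combining the three steps and letting $\ell\to\infty$, $\eta\downarrow 0$ identifies the limit as $\CRSMa$ via Proposition~\ref{prop:RSM3}.
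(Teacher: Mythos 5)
Your overall architecture (point-process input for the top $\ell$ order statistics jointly with the intersections \eqref{eq:joint}, identification of the limit through Proposition~\ref{prop:RSM3}, then a negligibility estimate for the remainder, with the $\alpha<1$ case handled by a first-moment bound at a fixed threshold $\eta a_n$) matches the paper for $\alpha<1$. The genuine gap is in the case $\alpha\ge 1$. Your plan is to truncate at the \emph{fixed} level $\eta a_n$, center, and control the centered fluctuations of $W_n(k)$ "by a Chebyshev estimate". But the quantity you must control is $\max_{k/n\in G}|W_n(k)-\esp W_n(k)|/a_n$, a maximum over roughly $n$ lattice locations. Per location the truncated variance is of order $m_n p_n(G)\,(\eta a_n)^{2-\alpha}L(\eta a_n)\asymp \eta^{2-\alpha}n^{-c_0}a_n^2$, so Chebyshev plus a union bound leaves a factor of order $n^{1-c_0}$, which does not vanish: nothing in Assumption~\ref{assump:Rn} forces $c_0>1$, and in all the paper's examples $c_0<1$. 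An exponential bound does not rescue the fixed-$\eta$ truncation either: Bernstein's inequality with increments bounded by $\eta a_n$ only yields a bound of the form $\exp(-c\epsilon^2/(\eta\epsilon+\cdots))$, constant in $n$, again lost to the union bound. This is exactly why the paper truncates at the \emph{shrinking} polynomial level $\delta_n a_n$ with $\delta_n=n^{-\gamma}$, $\gamma\in(0,c_0/\alpha)$: then Bernstein (Lemma~\ref{lem:bottom}) gives a superpolynomial bound $\exp(-Cn^{\gamma\wedge c_0})$ per location, which beats the union bound; and the price is a new intermediate regime, the values in $(\delta_n a_n,\,$top order statistics$)$, which is disposed of by the separate combinatorial estimate of Lemma~\ref{lem:middle} (with probability tending to one no $r$ of these ``middle'' sets intersect inside $G$, via $\rho_n(r)\le Cn^{1-rc_0}$). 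Your proposal contains no substitute for this two-tier truncation, so the negligibility step fails as written for $\alpha\ge1$.

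A secondary omission: your truncated sup-measure $M_n^{(\ell)}$ is the subset-optimized object (the paper's $\wt M_{n,\ell}$), while the actual restriction of $M_n$ to the top $\ell$ rewards forces, at each location $k$, the inclusion of \emph{all} covering top-$\ell$ rewards, including negative ones. Hence the reduction ``$|M_n-M_n^{(\ell)}|$ is controlled by the beyond-top-$\ell$ contribution'' is not automatic when $\mathsf p<1$: at the optimizing pair $(J_0,k_1)$ for $M_n^{(\ell)}$, other top-$\ell$ sets with negative rewards of order $a_n$ may cover $k_1$. The paper closes this with Lemma~\ref{lem:RnJ*} (using the lattice structure \eqref{eq:lattice} and \eqref{eq:p_n}, each nonempty $R_{n,J}\cap G$ contains, with probability tending to one, a point uncovered by the remaining $R_{n,j}$, $j\in[\ell]\setminus J$), which your argument should also invoke or reproduce.
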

\begin{Rem}\label{rem:assump}
We collect some comments on the assumptions.

\begin{enumerate}[(i)]
\item By independence and \eqref{eq:intersection_convergence}, it was shown in 
\citep[Theorem 2.1]{samorodnitsky19extremal} that the {\em joint convergence} of intersections
\equh\label{eq:joint}
\ccbb{R_{n,J}}_{J\subset[\ell]}\weakto \ccbb{\calR_J}_{J\subset[\ell]}
\eque
follows, which is a seemingly much stronger statement than \eqref{eq:intersection_convergence} (which states so only for every fixed $J$).

\item\label{item:2} It is implicitly assumed that for $K_0 = \ceil{1/c_0}$,  
$\proba\pp{\calR_{[K_0]}=\emptyset}=1$. 
In particular, $\calR$ is light (for all $t\in E$, $\proba(t\in \calR) = 0$). Indeed, for $\ell>1/c_0$, \eqref{eq:lattice} and \eqref{eq:p_n} imply that $\proba(R_{n,[\ell]}\cap G \neq \emptyset)\le \sum_{k/n\in G}\proba(k\in R_{n,1})^\ell  \le C(n+1)n^{-\ell c_0}\to 0$, for any $G\in\calG_0$, which combined with \eqref{eq:intersection_convergence} then implies that $\calR_{[\ell]}\cap G = \emptyset$ almost surely. Then take a countable union of sets from $\calG_0$ to cover $E$. 

\item 
The exclusion of a finite number of points of $[0,1]$ is due to the fact that, 
in some examples, $\calR$ is light over only a strict subset $E$ of $[0,1]$ but $[0,1]\setminus E\in\calR$ with probability one. Suppose $0\in[0,1]\setminus E$, as in two of our examples later. 
Then, natural discrete approximations $R_n$ of $\calR$ may satisfy that $0\in R_n$ with probability one. In this way, $\summ j1{m_n}X_j\inddd{k\in R_{n,j}}$ has different orders with $k=0$ and other values, and should be treated differently.  The analysis with $k=0$ is about i.i.d.~random variables and hence standard: however it is not easy to include in our framework the analysis for $k=0$ and other values in a simple unified manner. Therefore 0 is simply excluded from $E$.

\item The lattice condition \eqref{eq:lattice} is, in a sense, necessary for the intersection convergence \eqref{eq:intersection_convergence} from the modeling point of view. On the other hand,  one might view this as that the model is rigid: perturbing $R_n$ a little bit may maintain that $R_n\weakto \calR$ but violate \eqref{eq:intersection_convergence}. 
For an example that $R_n\weakto \calR$ but $R_{n,1}\cap R_{n,2}\not\weakto\calR_1\cap\calR_2$, consider $R_n:=(\vv\tau\cap[0,n])/n$, where $\vv\tau$ is the collection of consecutive renewal times of a renewal process of which the inter-arrival renewal times have $\beta$-regularly-varying tails, as in Section \ref{sec:srs}, but assume instead that the inter-arrival renewal times are continuous random variables.
Then $R_n\weakto\calR$ shall remain the same; however, now $\proba(R_{n,1}\cap R_{n,2} = \{0\}) = 1$, but with $\beta>1/2$, $\calR_1\cap\calR_2$ is almost surely a stable regenerative set. 
\item For $\alpha\ge 1$, the assumption \eqref{eq:kappa} can be relaxed by introducing a deterministic drift term in $M_n$, but it cannot be completely removed for $\alpha\ge 2$. See Remark \ref{rem:drift} for more details.
\end{enumerate}
\end{Rem}

\subsection{Proof of Theorem \ref{thm:1}}

For each $n\in\N$, let $(X_{1:m_n},\dots,X_{m_n:m_n})$ be a reordering of $(X_1,\dots,X_{m_n})$ such that
\[
|X_{1:m_n}|\ge \cdots\ge |X_{m_n:m_n}|,
\]
and set $\sigma_n:\{1,\dots,m_n\}\to \{1,\dots,m_n\}$ be such that $X_{\sigma_n(j)} \equiv X_{j:m_n}$. 
Introduce also
\[
\what J_{n,k}:=\ccbb{j\in[m_n]: k/n\in \what R_{n,j}}, k=0,\dots, n, \qmwith \what R_{n,j} := R_{n,\sigma_n(j)}.
\]
In words, $\what R_{n,j}$ is the random closed set corresponding to the $j$-th order statistic among $X_1,\dots,X_{m_n}$, and $\what J_{n,k}$ is the collection of the {\em rankings} of order statistics (instead of the original labels of unordered $X_1,\dots,X_{m_n}$) of which the associated random closed sets cover $k/n$. 
Then we can express
\equh\label{eq:M2}
M_n(\cdot) = \max_{\substack{k\in\{0,\dots,n\}\\k/n\in\cdot, \what J_{n,k}\ne\emptyset}} \sum_{j\in \what J_{n,k}}X_{j:m_n},
\eque
with the convention $\max\emptyset = -\infty$ as before.

Introduce  $\delta_n:=n^{-\gamma}$ for a parameter $\gamma>0$ to be specified later. 
Instead of working with $M_n$ in \eqref{eq:M2} we consider for  $\ell\in\N$ fixed, $n$ large enough so that $m_n\ge \ell$,
\begin{align*}
Y_{n,\ell}(k)&:={\sum_{j\in \what J_{n,k}\cap[\ell]}X_{j:m_n}},\\
Z_{n,\ell}(k)&:={\sum_{j\in \what J_{n,k}\setminus[\ell]}X_{j:m_n}\inddd{|X_{j:m_n}|/a_n>\delta_n}},\\
W_{n,\ell}(k)&:= {\sum_{j\in \what J_{n,k}\setminus[\ell]}X_{j:m_n}\inddd{|X_{j:m_n}|/a_n\le \delta_n}}, 
\end{align*}
with the convention $\sum_\emptyset = 0$.
We refer to the processes $Y_{n,\ell}, Z_{n,\ell}, W_{n,\ell}$ as the top, middle and bottom parts, respectively.
Eventually 
only the top part contributes and leads to the desired limit by taking $\ell\to\infty$ (Proposition \ref{prop:top} below).
The middle and bottom parts can be uniformly controlled if $\alpha\in(0,1)$, and the hard work is to deal with the case $\alpha\ge1$. 
The middle part is negligible because with high probability at most a finite fixed number of random closed sets from the middle part will intersect at any time $k=0,\dots,n$ (Lemma \ref{lem:middle}), and the bottom part, centered, is essentially sub-Gaussian and controlled by Bernstein inequality (Lemma \ref{lem:bottom}).

We start with analyzing the top part.
Introduce the corresponding random sup-measure
\equh\label{eq:M_n,ell}
M_{n,\ell}(\cdot)  :=\max_{\substack{k\in\{0,\dots,n\}\\k/n\in\cdot,  
\what J_{n,k}\cap[\ell]\ne\emptyset}}Y_{n,\ell}(k), \quad\ell\in\N.
\eque
Recall $\calM_\ell\topp {\mathsf p}$ and $\calM_{J}\topp{\mathsf p}$ in \eqref{eq:M_l,epsi} here for the convenience,
\[
\calM_\ell\topp {\mathsf p}(\cdot):= \max_{J\subset[\ell]}\calM_{J}\topp{\mathsf p}(\cdot)\qmwith \calM_{J}\topp{\mathsf p}(\cdot):= 
\begin{cases}
\displaystyle\sum_{j\in J}\frac{\varepsilon_j}{\Gamma_j^{1/\alpha}} & \mbox{ if } \calR_J\cap\cdot\ne\emptyset,\\
-\infty & \mbox{ otherwise.}
\end{cases}
\]
\begin{Prop}\label{prop:top}

 Under the assumptions in Theorem \ref{thm:1},
 we have  for every  $\ell\in\N$, 
\[
\frac1{a_n}M_{n,\ell}\weakto \mathsf p^{-1/\alpha}\calM_\ell\topp {\mathsf p}
\]
in $\SM(E,\wb\R)$ as $n\to\infty$. 
\end{Prop}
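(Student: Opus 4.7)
The plan is to combine the joint convergence of the top $\ell$ order statistics with the joint intersection convergence of the (reordered) random closed sets, pass to almost sure convergence by Skorokhod representation, and extract the sup-measure convergence via an upper-semi-continuous envelope; the one delicate step is to identify the envelope-limit with $\calM_\ell\topp{\mathsf p}$, for which the lightness of $\calR$ is essential.

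First, standard extreme-value arguments for the i.i.d.\ sequence $\{X_j\}$ satisfying \eqref{eq:RV} give
$$\pp{X_{j:m_n}/a_n}_{j=1}^\ell \weakto \mathsf p^{-1/\alpha}\pp{\varepsilon_j/\Gamma_j^{1/\alpha}}_{j=1}^\ell$$
with $\{\varepsilon_j\}$ as in the proof of Proposition~\ref{prop:RSM3}. Since $\sigma_n$ is $\{X_j\}$-measurable while $\{R_{n,j}\}$ is i.i.d.\ and independent of $\{X_j\}$, by exchangeability the reordered sequence $\{\widehat R_{n,j}\}_{j=1}^{m_n}$ is still i.i.d.\ with law $R_n$ and is independent of $\{X_{j:m_n}\}$. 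Combining this independence with the joint intersection convergence of Remark~\ref{rem:assump}\,(i) yields
$$\pp{\pp{X_{j:m_n}/a_n}_{j=1}^\ell,\, \ccbb{\widehat R_{n,J}}_{J\subset[\ell]}} \weakto \pp{\mathsf p^{-1/\alpha}\pp{\varepsilon_j/\Gamma_j^{1/\alpha}}_{j=1}^\ell,\, \ccbb{\calR_J}_{J\subset[\ell]}},$$
and Skorokhod representation lets us assume a.s.\ convergence. Writing $M_{n,\ell}(G) = (i^\vee f_n)(G)$ for the upper-semi-continuous envelope $f_n(t) := (\summ j1\ell X_{j:m_n}\indd{t\in \widehat R_{n,j}})\,\inddds{\widehat J_{n,t}\cap[\ell]\ne\emptyset}$, $t\in E$, and its candidate limit $f_\infty(t) := (\sum_{j\in \calJ_t^{[\ell]}}\mathsf p^{-1/\alpha}\varepsilon_j/\Gamma_j^{1/\alpha})\,\inddds{\calJ_t^{[\ell]}\ne\emptyset}$ with $\calJ_t^{[\ell]}:=\{j\in[\ell]:t\in\calR_j\}$, Fell convergence yields $\limsup_n M_{n,\ell}(G)/a_n \le \mathsf p^{-1/\alpha}\calM_\ell\topp{\mathsf p}(G)$ for any continuity set $G\in\calG_0^{\mathbb Q}$ (via $\widehat R_{n,J}\cap G\ne\emptyset\Leftrightarrow\calR_J\cap G\ne\emptyset$ eventually), and the matching $\liminf_n M_{n,\ell}(G)/a_n\ge \sup_{t\in G}f_\infty(t)$ follows from lattice approximants $k_n/n\in \widehat R_{n,\calJ_t^{[\ell]}}\cap G$ with $k_n/n\to t$ and $\widehat J_{n,k_n}\cap[\ell]=\calJ_t^{[\ell]}$ eventually.

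The remaining and most delicate step is to identify $\sup_{t\in G}f_\infty(t) = \mathsf p^{-1/\alpha}\calM_\ell\topp{\mathsf p}(G)$ a.s.\ (``$\le$'' being trivial). The thinning argument from the proof of Proposition~\ref{prop:RSM3} shows that an argmax $J^*$ of $\calM_\ell\topp{\mathsf p}(G)$ may be chosen inside $\{j\in[\ell]:\varepsilon_j=1\}$, and maximality forces $\calR_{J^*\cup\{j'\}}\cap G=\emptyset$ for all $j'\in[\ell]\setminus J^*$ with $\varepsilon_{j'}=1$. One therefore only needs a random point $T\in G\cap\calR_{J^*}$ that avoids every $\calR_{j'}$ with $j'\in[\ell]\setminus J^*$ and $\varepsilon_{j'}=-1$; this is where the lightness of $\calR$ enters decisively. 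By measurable selection one chooses $T$ as a $\sigma(\{\calR_j\}_{j\in J^*})$-measurable point of $\calR_{J^*}\cap G$ on the event $\{\calR_{J^*}\cap G\ne\emptyset\}$; conditioning on $T$, which is independent of the remaining $\calR_{j'}$'s, gives $\proba(T\in\calR_{j'})=0$ because $\proba(t\in\calR)=0$ for every fixed $t\in E$, and a union bound over the finite family finishes the identification. The crux of the whole proof is this identification-via-lightness argument; once it is in place, assembling the upper and lower bounds yields finite-dimensional convergence on $\calG_0^{\mathbb Q}$ and hence the desired weak convergence in $\SM(E,\wb\R)$ by \cite[Theorem~12.3]{vervaat97random}.
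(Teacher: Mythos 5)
Your proposal is correct in substance, but it resolves the crucial difficulty by a genuinely different route than the paper. Both arguments share the first half: the joint convergence of the top $\ell$ order statistics together with the reordered sets $\{\what R_{n,j}\}$ (which, as you note, remain i.i.d.\ with law $R_n$ and independent of the order statistics), which is the content of the paper's Lemma \ref{lem:top_PP}, stated there as a point-process convergence in $\mathfrak M_p(\wb\R\times\calF(E))$ followed by the continuous mapping theorem to get $\wt M_{n,\ell}/a_n\weakto \mathsf p^{-1/\alpha}\calM_\ell\topp{\mathsf p}$, with no Skorokhod coupling. The divergence is in how one handles the fact that negative rewards attached to extra sets covering an optimal intersection can pull $M_{n,\ell}$ strictly below the idealized maximum $\wt M_{n,\ell}=\max_{J\subset[\ell]}\wt M_{n,J}$. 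The paper settles this entirely at the discrete level: Lemma \ref{lem:RnJ*} shows, using only \eqref{eq:lattice} and \eqref{eq:p_n}, that with probability tending to one either $R_{n,J}\cap G=\emptyset$ or its left-most point avoids every $R_{n,j}$, $j\in[\ell]\setminus J$ (conditional probability at least $(1-p_n(G))^{\ell-|J|}\to1$), whence $\proba(M_{n,\ell}(G)=\wt M_{n,\ell}(G))\to1$; no property of the limit $\calR$ is used at this stage. You instead push the difficulty to the limit: after Skorokhod representation you sandwich $M_{n,\ell}(G)/a_n$ between the envelope bounds and must identify $\sup_{t\in G}f_\infty(t)$ with $\mathsf p^{-1/\alpha}\calM_\ell\topp{\mathsf p}(G)$, which you do via measurable selection and lightness of $\calR$. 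This works --- lightness is indeed guaranteed by the standing assumptions (Remark \ref{rem:assump}(\ref{item:2})) --- but note one imprecision: as written you select $T$ using the random argmax $J^*$, which depends on \emph{all} of $\{\calR_j\}_{j\in[\ell]}$ (and on the $\varepsilon_j,\Gamma_j$), so $T$ is not independent of the remaining sets; the fix is to perform the selection for every fixed $J\subset[\ell]$ (say $T_J$ a measurable point of $\calR_J\cap G$, which is $\sigma(\{\calR_j\}_{j\in J})$-measurable), apply lightness and independence via Fubini to each fixed pair $(J,j')$, and take a union bound over the finitely many pairs before specializing to $J=J^*$. With that repair your argument is complete. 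Comparing the two: the paper's discrete lemma is more elementary and, as it emphasizes, requires only very mild assumptions on the prelimit sets ($p_n(G)\to0$), whereas your route trades that for an argument on the limit object that makes the role of lightness of $\calR$ explicit; your lower-bound step via lattice approximants $k_n/n\to t$ with $\what J_{n,k_n}\cap[\ell]=\calJ_t^{[\ell]}$ eventually is a sound use of Fell convergence (the compact-neighborhood avoidance of the $\calR_{j'}$ not containing $t$), and the upper bound coincides with the paper's trivial inequality $M_{n,\ell}(G)\le\wt M_{n,\ell}(G)$.
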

Instead of working with $M_{n,\ell}$ in \eqref{eq:M_n,ell}, we consider the following approximation. Set 
\[
\wt M_{n,J}(\cdot):=
\begin{cases}
\sum_{j\in J} X_{j:m_n} & \mbox{ if } \what R_{n,J}\cap \cdot\ne \emptyset,\\
-\infty & \mbox{ otherwise,}
\end{cases} 
\]
with $\what{R}_{n,J}:=\bigcap_{j\in J}\what R_{n,j}$, where $J\subset\N, |J|<\infty$,
and
\[
\wt M_{n,\ell}(\cdot):=\max_{J\subset[\ell]}\wt M_{n,J}(\cdot), \quad\ell\in\N.
\]
Introduce $\wb a_n$ be such that $\limn m_n\proba(|X_1|>\wb a_n) = 1$. Note that $\wb a_n\sim \mathsf p^{-1/\alpha}a_n$. 

\begin{Lem}\label{lem:top_PP}
Under the assumptions of Theorem \ref{thm:1}, for $\ell\in\N$, 
\equh\label{eq:top_PP}
\sum_{J\subset[\ell]}\ddelta{\sum_{j\in J}X_{j:m_n}/\wb a_n,\what R_{n,J}}\weakto \sum_{J\subset[\ell]}\ddelta{\sum_{j\in J}\varepsilon_j\Gamma_j^{-1/\alpha},\calR_J}
\eque
in $\mathfrak M_p(\wb \R\times\calF(E))$,  and 
\[
\frac1{a_n}\wt M_{n,\ell}\weakto \mathsf p^{-1/\alpha}\calM_\ell\topp {\mathsf p}
\]
in $\SM(E,\wb\R)$, as $n\to\infty$.
\end{Lem}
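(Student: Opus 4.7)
The plan is to derive both conclusions of the lemma from a single joint weak convergence of the top $\ell$ (signed) order statistics together with the family of intersection sets indexed by $J\subset[\ell]$.

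First, I will establish
\[
\pp{(X_{j:m_n}/\wb a_n)_{j\in[\ell]},(\what R_{n,J})_{J\subset[\ell]}}\weakto \pp{(\varepsilon_j\Gamma_j^{-1/\alpha})_{j\in[\ell]},(\calR_J)_{J\subset[\ell]}}
\]
in $\wb\R^\ell\times\calF(E)^{2^\ell}$. For the first block I will invoke the classical Poisson point process convergence for i.i.d.\ regularly varying random variables under \eqref{eq:RV} and \eqref{eq:a_n},
\[
\summ j1{m_n}\ddelta{X_j/\wb a_n}\weakto\sif j1\ddelta{\varepsilon_j\Gamma_j^{-1/\alpha}} \text{ in } \mathfrak M_p(\wb\R\setminus\{0\}),
\]
and read off the top $\ell$ atoms, ordered by absolute value, via continuous mapping. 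For the second block I will use that $\sigma_n$ is measurable with respect to $\{X_j\}_j$ only, so that $\{\what R_{n,j}\}_{j\in[m_n]}$ is equal in law to $\{R_{n,j}\}_{j\in[m_n]}$ and independent of the $X_j$'s; the joint intersection convergence \eqref{eq:joint} recalled in Remark \ref{rem:assump}(i) then provides $(\what R_{n,J})_{J\subset[\ell]}\weakto(\calR_J)_{J\subset[\ell]}$. Independence of the two blocks (in both prelimit and limit) yields the joint convergence.

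Next, for the point process convergence \eqref{eq:top_PP}, I will apply the continuous mapping theorem to the map
\[
\Phi\colon\pp{(x_j)_{j\in[\ell]},(R_J)_{J\subset[\ell]}}\longmapsto\sum_{J\subset[\ell]}\ddelta{\sum_{j\in J}x_j,R_J},
\]
which is a finite sum of continuous Dirac-mass maps $(x,R)\mapsto\ddelta{x,R}$ and is therefore continuous into $\mathfrak M_p(\wb\R\times\calF(E))$ with the vague topology (the ambient space being compact, so the $\mathfrak M_p$ framework is without issue).

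For the random-sup-measure statement, I will verify convergence of finite-dimensional distributions on the probability-determining class $\calG_0^{\mathbb Q}$. For $G\in\calG_0^{\mathbb Q}$,
\[
\frac{\wt M_{n,\ell}(G)}{a_n}=\frac{\wb a_n}{a_n}\max_{J\subset[\ell]}\pp{\sum_{j\in J}\frac{X_{j:m_n}}{\wb a_n}}\inddds{\what R_{n,J}\cap G\ne\emptyset},
\]
with $\wb a_n/a_n\to\mathsf p^{-1/\alpha}$. The right-hand side is a deterministic functional $\Phi_G$ of the joint vector from Step 1. The hard part is the continuity of this functional: in the Fell topology the indicator $R\mapsto\mathbf 1_{R\cap G\ne\emptyset}$ fails to be continuous precisely at configurations with $R\cap\partial G\ne\emptyset$. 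Lightness of $\calR$ (Remark \ref{rem:assump}(ii)), combined with the fact that $\partial G$ consists of only the two endpoints of $G\in\calG_0^{\mathbb Q}$, ensures $\P(\calR_J\cap\partial G\ne\emptyset)=0$ for every $J\subset[\ell]$; a countable union bound extends this simultaneously to any finite collection $G_1,\dots,G_d\in\calG_0^{\mathbb Q}$. The continuous mapping theorem then gives
\[
\pp{\wt M_{n,\ell}(G_1),\dots,\wt M_{n,\ell}(G_d)}/a_n\weakto \mathsf p^{-1/\alpha}\pp{\calM_\ell\topp{\mathsf p}(G_1),\dots,\calM_\ell\topp{\mathsf p}(G_d)}.
\]
Upgrading to convergence in $\SM(E,\wb\R)$ is routine via the criterion recalled in Section \ref{sec:background}, using that $\calG_0^{\mathbb Q}$ is convergence-determining and that, again by lightness of $\calR$, $\calM_\ell\topp{\mathsf p}(G)=\calM_\ell\topp{\mathsf p}(\wb G)$ almost surely for each $G\in\calG_0^{\mathbb Q}$.
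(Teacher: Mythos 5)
Your proposal is correct, and its first half is essentially the paper's argument: the paper likewise reduces \eqref{eq:top_PP} to separate convergence of the rewards (the top order statistics, classical under \eqref{eq:RV}, as in \citep{resnick87extreme}) and of the intersection sets via \eqref{eq:joint}, with independence giving the joint statement; your explicit remark that $\sigma_n$ is a function of $\{X_j\}$ alone, so that $(\what R_{n,j})_j\eqd(R_{n,j})_j$ and is independent of the rewards, is the (left implicit) justification the paper relies on, and making it explicit is a plus. Where you diverge is the second conclusion. The paper does not pass through finite-dimensional distributions at all: it observes that $(x,F)\mapsto x\inddds{F\cap\cdot\ne\emptyset}$ is continuous from $\R\times\calF(E)$ into $\SM(E,\wb\R)$ with the sup vague topology, and that finite maxima are continuous on $\SM(E,\wb\R)^d$ \citep[Theorem 14.6]{vervaat97random}, so the continuous mapping theorem applied to the joint convergence yields $\{\wb a_n\inv\wt M_{n,J}\}_{J\subset[\ell]}\weakto\{\calM_J\topp{\mathsf p}\}_{J\subset[\ell]}$ in $\SM(E,\wb\R)^{2^\ell}$ and hence the claim directly, with no boundary or lightness considerations needed at this stage. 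You instead evaluate on $\calG_0^{\mathbb Q}$, handle the Fell-topology discontinuity of $F\mapsto\ind_{F\cap G\ne\emptyset}$ by showing $\P(\calR_J\cap\partial G\ne\emptyset)=0$ via lightness, and then invoke the Theorem 12.3-type criterion. This is valid — indeed, since the paper defines convergence in distribution of random sup-measures through finite-dimensional distributions over a convergence-determining class, your endpoint matches that definition almost verbatim — but it imports the lightness/boundary bookkeeping into a lemma where the paper's sup-vague continuity argument makes it unnecessary (the discontinuity set of the indicator is only $\{F:F\cap G=\emptyset,\ F\cap\partial G\ne\emptyset\}$, so your ``precisely at $R\cap\partial G\ne\emptyset$'' is a harmless overstatement). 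In short: the paper's route is shorter and works without lightness; yours is more hands-on and self-contained in that it avoids citing the continuity of the max operation in the sup vague topology, at the cost of the null-boundary verifications.
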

\begin{proof}[Proof of Lemma \ref{lem:top_PP}]
 For the first part, 
it suffices to show the convergence of the rewards and the reward times, respectively. 
The joint convergence of the rewards  $\{\wb a_n\inv\sum_{j\in J}X_{j:m_n}\}_{J\subset[\ell]}\weakto \{\sum_{j\in J}\varepsilon_j\Gamma_j^{-1/\alpha}\}_{J\subset[\ell]}$ is well-known to follow from \eqref{eq:RV} (e.g.~\citep{resnick87extreme}). The joint convergence of intersections of random closed sets have been recalled in \eqref{eq:joint}. 

For the second part, we first recall that the mapping $\R\times\calF(E)\ni (x,F)\mapsto x\inddds{F\cap\cdot\ne\emptyset}\in\SM(E,\wb\R)$ is continuous (recalling \eqref{eq:indicator*}), and this can be readily checked by definition. Also we recall that for $d\in\N$, the mapping $\SM(E,\wb \R)^d\ni (m_1,\dots,m_d)\mapsto \max_{i=1,\dots,d}m_i\in \SM(E,\wb \R)$ is continuous ($(\max_{i=1,\dots,d}m_i)(\cdot) := \max_{i=1,\dots,d}m_i(\cdot)$) \citep[Theorem 14.6]{vervaat97random}. 
Then, \eqref{eq:top_PP} implies immediately that
\[
\ccbb{\frac1{\wb a_n}\wt M_{n,J}}_{J\subset[\ell]}\weakto \ccbb{\calM_{J}\topp{\mathsf p}}_{J\subset[\ell]}
\]
in $\SM(E,\wb\R)^{2^\ell}$, by continuous mapping theorem.
\end{proof}
However, it should be clear that $\wt M_{n,\ell}$ and $M_{n,\ell}$ are not identical in general, as explained in the following remark. 
\begin{Rem}
When the rewards are all non-negative, $M_{n,\ell}$ and $\wt M_{n,\ell}$ are identical. In the presence of some negative values they may differ over some interval $G$ (and we shall show that this occurs with negligible probability as $n\to\infty$ below). Figure \ref{fig:1} provides an illustration, where for comparison we also include 
\[
\what M_{n,\ell}(\cdot):=\max_{\substack{k\in\{0,\dots,n\}\\
k/n\in\cdot, J_{n,k}\ne\emptyset}}\max_{j=1,\dots,m_n}X_j,
\] the limit of which is the CRSM (without aggregations).  
All random sup-measures are coupled, based on the same $\{X_j, R_{n,j}\}_{j=1,\dots,m_n}$. 
In words, the aggregations may push CRSM (corresponding to $\what M_{n,\ell}$) upwards where intersections occur with positive rewards (but never pull CRSM downwards anywhere), while the empirical random sup-measure of our models ($M_{n,\ell}$) is obtained by pushing upwards or pulling downwards $\what M_{n,\ell}$ where intersections occur (the direction depending on the signs of the cumulative rewards at the intersections). 
\end{Rem}
\begin{figure}[ht!]
\begin{center}
\includegraphics[width = \textwidth]{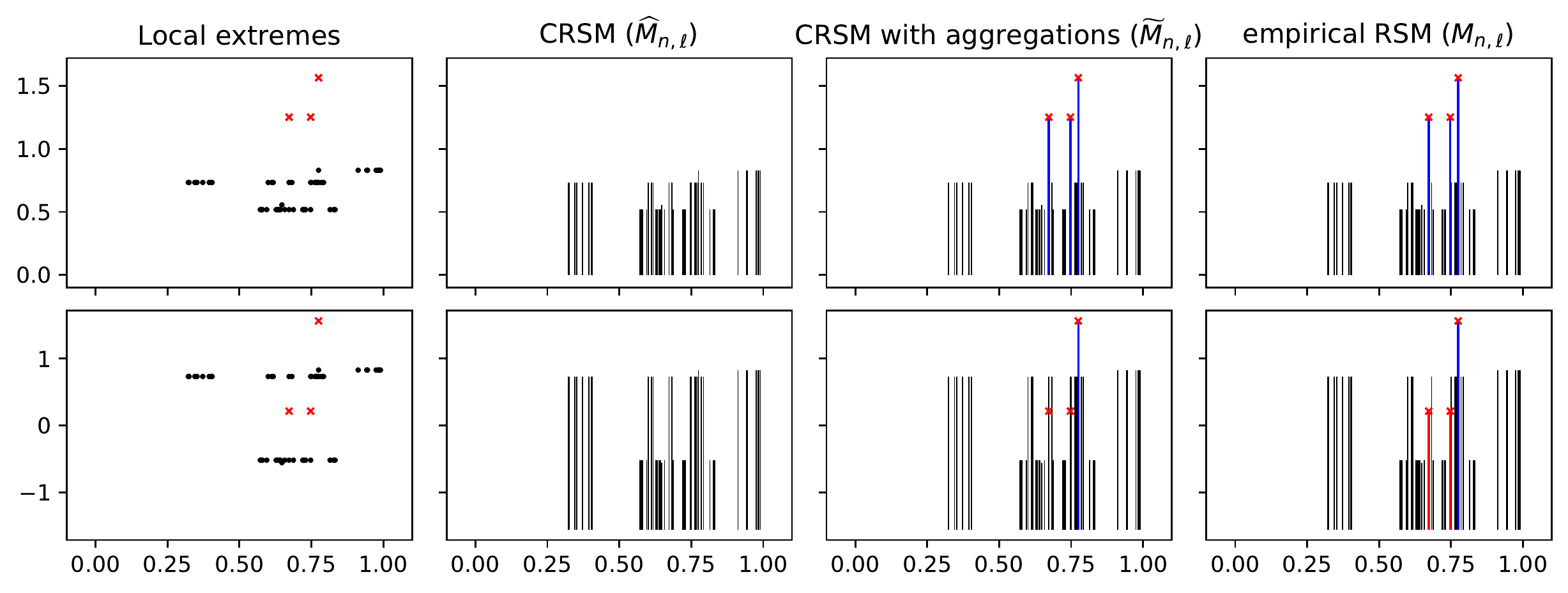}
\end{center}
\caption{An illustration for CRSM ($\what M_{n,\ell}$), CRSM with aggregations ($\wt M_{n,\ell}$) and the empirical random sup-measure of our model ($M_{n,\ell}$), regarding the effect of positive/negative rewards, with $m_n = \ell = 5$.
Top: all rewards are positive. Bottom: some rewards are negative. 
In each plot, the hypograph of the corresponding random sup-measure is plotted. The `push-ups' and `pull-downs' of the CRSM due to intersections are marked in blue and red colors, respectively. 
}\label{fig:1}
\end{figure}
The key to show that $M_{n,\ell}$ and $\wt M_{n,\ell}$ are close  is the following. It is remarkable that it requires very mild assumptions on the random closed sets. 
\begin{Lem}\label{lem:RnJ*}
Under the assumptions \eqref{eq:lattice} and  \eqref{eq:p_n}, for every $\ell$ fixed and open set $G$,
\begin{multline*}
\limn\proba\pp{R_{n,J,\ell}^*\cap G \ne \emptyset \mbox{ or } R_{n,J}\cap G = \emptyset} = 1 \\
\mwith R_{n,J,\ell}^* :=R_{n,J}\setminus\pp{\bigcup_{j\in[\ell]\setminus J}R_{n,j}}, \mfa J\subset[\ell].
\end{multline*}
\end{Lem}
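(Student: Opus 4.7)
The plan is to bound the probability of the complementary ``bad'' event
\[
\calE_{n,J} := \{R_{n,J}\cap G \ne\emptyset\}\cap\{R_{n,J,\ell}^*\cap G = \emptyset\}
\]
and show $\proba(\calE_{n,J})\to 0$ for every fixed $J\subset[\ell]$. The case $J=\emptyset$ is degenerate: interpreting $R_{n,\emptyset}$ as the ambient space $E$, one has $R_{n,\emptyset}\cap G = G$, while $R_{n,\emptyset,\ell}^* = E\setminus\bigcup_{j\in[\ell]} R_{n,j}$ omits at most $\ell(n+1)$ lattice points by \eqref{eq:lattice}, and so still meets any non-empty open $G$. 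Hence the claim holds trivially for $J=\emptyset$.

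For $J\ne\emptyset$, the key observation is that on $\calE_{n,J}$, every point of the non-empty set $R_{n,J}\cap G\subset \{0,1/n,\dots,1\}$ must lie in $\bigcup_{j'\in[\ell]\setminus J}R_{n,j'}$. So, defining $K_n$ as the smallest element of $R_{n,J}\cap G$ when that intersection is non-empty (and arbitrary otherwise), $K_n$ is $\calG_J$-measurable with $\calG_J := \sigma(R_{n,j}:j\in J)$, and on $\calE_{n,J}$ one has $K_n\in\bigcup_{j'\in[\ell]\setminus J}R_{n,j'}$.

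Since the $R_{n,j}$'s are i.i.d., the family $\{R_{n,j'}:j'\in [\ell]\setminus J\}$ is independent of $\calG_J$. On $\{R_{n,J}\cap G\ne\emptyset\}$, $K_n$ is a fixed (given $\calG_J$) point of $G\cap\{0,1/n,\dots,1\}$, so a union bound over $j'\in[\ell]\setminus J$ yields
\[
\proba\pp{K_n\in\bigcup_{j'\in[\ell]\setminus J}R_{n,j'}\mmid \calG_J}\le (\ell-|J|)\,p_n(K_n)\le (\ell-|J|)\,p_n(G).
\]
Since $\calE_{n,J}\subset \ccbb{K_n\in\bigcup_{j'\in[\ell]\setminus J}R_{n,j'}}\cap\{R_{n,J}\cap G\ne\emptyset\}$, taking expectation and invoking \eqref{eq:p_n} gives $\proba(\calE_{n,J}) \le (\ell-|J|)\,p_n(G) \le C\,n^{-c_0}\to 0$. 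The only (mild) technical point is the measurable selection of $K_n$, trivialized by \eqref{eq:lattice}; the rest of the argument uses only the uniform decay in \eqref{eq:p_n} and the independence of the $R_{n,j}$'s, in line with the lemma's remark that the hypotheses on the random closed sets are very mild.
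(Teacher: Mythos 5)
Your argument is correct and is essentially the paper's own proof: both isolate the left-most lattice point of $R_{n,J}\cap G$ (your $K_n$, the paper's $\what g_{n,J}$), exploit its measurability with respect to $\sigma(R_{n,j}:j\in J)$ and independence from the remaining sets, and conclude from $p_n(G)\le Cn^{-c_0}\to 0$. The only cosmetic difference is that you bound the complementary event via a union bound $(\ell-|J|)p_n(G)$, while the paper lower-bounds the good event by $(1-p_n(G))^{\ell-|J|}$.
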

\begin{proof}
In words, the lemma says that with probability going to one, either $R_{n,J}$ does not intersect $G$, or it intersects with $G$  at some location(s) uncovered by any other $R_{n,j}$ for $j\in [\ell]\setminus J$. 
Write the probability of interest as 
\[
\proba(R_{n,J}\cap G = \emptyset) + \proba(R_{n,J,\ell}^*\cap G\ne\emptyset\mid R_{n,J}\cap G \ne\emptyset)\proba(R_{n,J}\cap G\ne\emptyset). 
\]
Let $\what g_{n,J} := \min\{R_{n,J}\cap G\}$ denote the left-most point of the intersection provided it is not empty. Then, 
\begin{align*}
\proba\pp{R_{n,J,\ell}^*\cap G\ne\emptyset\mid R_{n,J}\cap G\ne\emptyset} & \ge \proba\pp{\what g_{n,J}\in R_{n,J,\ell}^*\cap G \mid R_{n,J}\cap G\ne\emptyset} 
\\
& \ge (1-p_n(G))^{\ell-|J|}.
\end{align*}
This completes the proof.
\end{proof}

\begin{proof}[Proof of Proposition \ref{prop:top}]

The goal is to show
\[
\limn\proba\pp{M_{n,\ell}(G) = \wt M_{n,\ell}(G)} = 1 \mfa G\in\calG_0.
\]
To do so, note that on the event $M_{n,\ell}(G)=-\infty$, we have $J_{n,k}=\emptyset$ for all $k$ such that $k/n\in G$, and hence $\what R_{n,J}\cap G = \emptyset$ for all $J\subset[\ell]$. So $M_{n,\ell}(G) = \wt M_{n,\ell}(G)$ as both sides are $-\infty$. From now on we restrict to the event  $M_{n,\ell}(G)\ne -\infty$. 
We first show that $M_{n,\ell}(G)\le \wt M_{n,\ell}(G)$ always hold {\em without our assumptions} on $R_n,\calR$. 
Suppose that $M_{n,\ell}(G)$ is achieved by the cumulative rewards at certain time $k_0$, namely  
\[
k_0/n\in \what R_{n,\what J_{n,k_0}\cap [\ell]}\cap G
\]
and
\[
M_{n,\ell}(G) = Y_{n,\ell}(k_0) = \sum_{j\in \what J_{n,k_0}\cap [\ell]} X_{j:m_n} = \wt M_{n,\what J_{n,k_0}\cap[\ell]}(\{k_0/n\}).
\] 
We thus have
 \[
 M_{n,\ell}(G) = \wt M_{n,\what J_{n,k_0}\cap [\ell]}(\{k_0/n\})= \wt M_{n,\what J_{n,k_0}\cap[\ell]}(G)
 \leq \max_{J\subset [\ell]} \wt M_{n,J}(G)=\wt M_{n,\ell}(G).
 \]

Now we show that with probability going to one, $M_{n,\ell}(G)\ge \wt M_{n,\ell}(G)$. 
Assume that $\wt M_{n,\ell}(G)$ is achieved  by $\wt M_{n,J_0}(G)$ for some non-empty set $J_0\subset [\ell]$, that is, $\wt M_{n,\ell}(G) = \wt M_{n,J_0}(G) = \sum_{j\in J_0} X_{j:m_n}$. 
Then one can find 
\[
k_1/n \in \what R_{n,J_0}\cap G\ne \emptyset \quad\mbox{ such that }\quad  \wt M_{n,J_0}(G) = \wt M_{n,J_0}(\{k_1/n\}).
\]
It suffices to find such a $k_1$ that in addition satisfies that $J_{n,k_1}\cap[\ell] = J_0$, or equivalently $\what R_{n,J_0,\ell}^*\cap G\ne\emptyset$. This latter event is contained in the event
\[
\bigcap_{\emptyset\subsetneq J\subsetneq [\ell]}\ccbb{R_{n,J,\ell}^*\cap G\ne\emptyset \mbox{ or } R_{n,J}\cap G = \emptyset}.
\]
Lemma \ref{lem:RnJ*} entails that the above event has probability going to one as $n\to\infty$. This completes the proof.
\end{proof}

Next we deal with the middle part. 
\begin{Lem}\label{lem:middle}
With
$\gamma\in(0,{c_0}/\alpha),\delta_n = n^{-\gamma}$, 
we have that under the assumptions \eqref{eq:lattice} and \eqref{eq:p_n},
\[
\lim_{\ell\to\infty}\limsupn\proba\pp{\frac 1{a_n}\max_{k/n\in G}\abs{Z_{n,\ell}(k)}>\epsilon} = 0, \quad \mfa \epsilon>0, G\in\calG_0.
\]
\end{Lem}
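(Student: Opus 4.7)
The plan is to dominate $\max_{k/n\in G}|Z_{n,\ell}(k)|$ by a constant multiple of the $(\ell+1)$-th largest reward on a high-probability event, and then exploit that $|X_{\ell+1:m_n}|/a_n$ converges to a random quantity that vanishes as $\ell\to\infty$. The key reduction is that, because $j\mapsto|X_{j:m_n}|$ is nonincreasing, the restriction $|X_{j:m_n}|/a_n>\delta_n$ forces $j\le N_n^{\rm mid}:=\#\ccbb{j\in[m_n]:|X_j|/a_n>\delta_n}$, so that for every $k$
\[
|Z_{n,\ell}(k)| \le |X_{\ell+1:m_n}|\cdot \#\ccbb{j\in(\ell,N_n^{\rm mid}]:k/n\in\what R_{n,j}}.
\]
It then suffices to show that the cover count above is bounded by a constant (independent of $n$) uniformly in $k/n\in G$, with probability tending to $1$.

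For this I would establish two high-probability events. Since $N_n^{\rm mid}$ is Binomial with mean $m_n\proba(|X_1|>a_n\delta_n)$, regular variation with index $-\alpha$ together with \eqref{eq:a_n} and Potter's bound (to absorb the slowly varying factor) gives $\esp[N_n^{\rm mid}] \le Cn^{\gamma\alpha+\eta}$ for any $\eta>0$; since $\gamma<c_0/\alpha$ one can choose $\eta$ so small that $\gamma\alpha+\eta<c_0$, and then set $M_n:=\ceil{n^{\gamma\alpha+\eta}}$, so that $\proba(N_n^{\rm mid}>CM_n)\to 0$ by Markov. Next, since $\sigma_n$ depends only on $\{X_j\}$, which is independent of the i.i.d.\ family $\{R_{n,j}\}_{j\in[m_n]}$, the unordered collection $\{\what R_{n,j}\}_{j\in[M_n]}$ has the same distribution as $\{R_{n,j}\}_{j\in[M_n]}$. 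A union bound over $k\in\{0,\dots,n\}$ with $k/n\in G$, combined with independence of the $R_{n,j}$'s and \eqref{eq:p_n}, yields
\[
\proba\pp{\max_{k/n\in G}\#\ccbb{j\in[M_n]:k/n\in\what R_{n,j}}\ge K_1}\le (n+1)\binom{M_n}{K_1}p_n(G)^{K_1}\le Cn^{1+K_1(\gamma\alpha+\eta-c_0)}.
\]
For any integer $K_1>1/(c_0-\gamma\alpha-\eta)$, a fixed constant, the exponent is negative and the probability vanishes.

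On the intersection of these two events the opening display gives $\max_{k/n\in G}|Z_{n,\ell}(k)|/a_n \le K_1|X_{\ell+1:m_n}|/a_n$, and the standard convergence of order statistics under \eqref{eq:RV} supplies $|X_{\ell+1:m_n}|/a_n\weakto\mathsf p^{-1/\alpha}\Gamma_{\ell+1}^{-1/\alpha}$. The limsup of the probability of interest is therefore bounded by $\proba(K_1\mathsf p^{-1/\alpha}\Gamma_{\ell+1}^{-1/\alpha}>\epsilon)$, which tends to $0$ as $\ell\to\infty$ since $\Gamma_{\ell+1}\to\infty$ almost surely. The main obstacle is the combinatorial step in the second paragraph: correctly replacing the $\what R_{n,j}$'s by plain $R_{n,j}$'s via the independence of rewards and random closed sets, and then calibrating $M_n$ and $K_1$ so that the power of $n$ emerging from the union bound is strictly negative. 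Everything else is a routine application of Markov's inequality together with continuous mapping.
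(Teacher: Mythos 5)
Your argument is correct and is essentially the paper's proof in a slightly repackaged form: the paper combines your two high-probability events into the single first-moment bound $\proba(\zeta_n(r)>0)\le\binom{m_n}{r}\proba(|X_1|>a_n\delta_n)^r(n+1)p_n(G)^r$, where $\zeta_n(r)$ counts $r$-tuples of middle-range sets meeting at a common point of $G$, and your calibration $K_1>1/(c_0-\gamma\alpha-\eta)$ matches its choice $r>1/(c_0-\alpha\gamma)$, after which both proofs conclude via $|X_{\ell+1:m_n}|/a_n\weakto\mathsf p^{-1/\alpha}\Gamma_{\ell+1}^{-1/\alpha}$ and $\Gamma_{\ell+1}\to\infty$. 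The only cosmetic point is the Markov step for $N_n^{\rm mid}$: apply the expectation bound with a strictly smaller exponent than the one defining $M_n$ (and carry the constant $C$ into the binomial coefficient), exactly as the exponent bookkeeping in the paper's treatment of the two regimes of $a_n\delta_n$.
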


\begin{proof}
Throughout we fix $G$.
Consider
\[
\zeta_n(r):= \sum_{\ell+1\le j_1<\cdots<j_r\le m_n}\pp{\prodd s1 r\inddd{|X_{j_s:m_n}/a_n|>\delta_n}}\inddd{\bigcap_{s=1}^r \what R_{n,j_s}\cap G\ne\emptyset}.
\]
Then, the desired result follows from,  for $r>1/(c_0-\alpha\gamma)$, 
\equh\label{eq:Qn}
\limn\proba\pp{\zeta_n(r) > 0} = 0,
\eque
which in words says that the probability that there are at least $r$ different rewards collected by the middle process at certain time $k\in\{0,1,\dots,n\}, k/n\in G$, goes to zero, 
and
\equh\label{eq:zeta_n=0}
\lim_{\ell\to\infty}\limsupn\proba\pp{\frac 1{a_n}\max_{k/n\in G}\abs{Z_{n,\ell}(k)}>\epsilon, \zeta_n(r) = 0} = 0, \mfa \epsilon>0.
\eque
For \eqref{eq:zeta_n=0}, it suffices to notice that on the event $\zeta_n(r)=0$ at every location $k$ there are at most $r-1$ different $j$ among $\ell+1,\dots,m_n$ such that $k\in \what R_{n,j}$. 
Therefore, 
\begin{align*}
\limsupn  &\proba\pp{\frac 1{a_n}\max_{k/n\in G}\abs{Z_{n,\ell}(k)}>\epsilon, \zeta_n(r) = 0}\\
& = \limsupn \proba\pp{\frac 1{a_n}\max_{k/n\in G}\abs{Z_{n,\ell}(k)}>\epsilon\mmid \zeta_n(r) = 0}\proba(\zeta_n(r) = 0) \\
&\le \proba\pp{\mathsf p^{-1/\alpha}(r-1)\Gamma_{\ell+1}^{-1/\alpha}>\epsilon},
\end{align*}
whence \eqref{eq:zeta_n=0} follows.
Now it remains to show \eqref{eq:Qn}.
Introduce
\[
\proba\pp{\bigcap_{j=\ell+1}^{\ell+r} \what R_{n,j}\cap G\ne\emptyset}  = \proba\pp{R_{n,[r]}\cap G\ne\emptyset} =: \rho_n(r).
\]Note that we always have
\[
\rho_n(r) \le \sum_{k/n\in G} \proba\pp{k/n\in R_{n,[r]}} \le (n+1)\max_{k/n\in G}p_n(k)^r = (n+1)p_n(G)^r \le Cn^{1-r c_0},
\]
the last step follows from the assumption \eqref{eq:p_n}.
Moreover, 
\begin{align}
\proba(\zeta_n(r)>0)&\le \esp \zeta_n(r)
\le\esp\pp{\sum_{1\le j_1<\cdots<j_r\le m_n}\prodd s1r\inddd{|X_{j_s}|>a_n\delta_n}\inddd{\bigcap_{s=1}^r\what R_{j_s}\cap G\ne\emptyset}}\nonumber
\\
 &\le \binom {m_n}{r}\cdot\proba\pp{|X_1|>a_n\delta_n}^r \cdot \rho_n(r).\label{eq:an_deltan}
 \end{align}
Here and in a few places below, we shall discuss two situations $a_n\delta_n\to\infty$ and $\limsupn a_n\delta_n<\infty$, respectively. Note that these two cases do not include the situation that $\limsupn a_n\delta_n = \infty$ but $a_n\delta_n\not\to\infty$; but the same conclusion for this third case can be derived from the first two by considering subsequences. 
 
 If $\limsupn a_n\delta_n<\infty$, then $a_n$ is bounded by $C\delta_n^{-1}$, and \eqref{eq:an_deltan} is bounded by
$C\delta_n^{-\alpha r}\rho_n(r) = Cn^{r\alpha\gamma+1-c_0r}$ up to a slowly-varying function in $n$, and the assumption $r>1/(c_0-\alpha\gamma)$ guarantees that it goes to zero. 
If $a_n\delta_n\to\infty$, then \eqref{eq:an_deltan} is bounded from above by
\begin{align*}
 \frac C{r!}m_n^r (a_n\delta_n)^{-\alpha r}L^r(a_n\delta_n)\rho_n(r) &\le C\delta_n^{-\alpha r}\rho_n(r) \cdot \frac{m_n^r}{a_n^{\alpha r}} L^r(a_n)\frac{L^r(a_n\delta_n)}{L^r(a_n)}\\
 &\le C\delta_n^{-\alpha r}n^{\epsilon_1}\rho_n(r),
\end{align*}
for $\epsilon_1>0$ arbitrarily small by Potter's bound (recall also \eqref{eq:a_n}). So the above is bounded by  $Cn^{r(\alpha\gamma-c_0)+1+\epsilon_1}$. To complete the proof of \eqref{eq:Qn}, it suffices to take $\epsilon_1>0$ such that $r(\alpha\gamma-c_0)+1+\epsilon_1<0$.
\end{proof}
Next we deal with the bottom part.
\begin{Lem}\label{lem:bottom}
For all $\alpha\ge1$, $\gamma\in(0,c_0/\alpha)$ with $\delta_n =n^{-\gamma}$, under assumptions \eqref{eq:lattice} and \eqref{eq:p_n},
for all $\ell\in\N$,
\[
\limsupn\proba\pp{\frac 1{a_n}\max_{k/n\in G}\abs{W_{n,\ell}(k)}>\epsilon} = 0 \mfa \epsilon>0.
\]
\end{Lem}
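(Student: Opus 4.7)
The plan is to replace $W_{n,\ell}$ by the unrestricted truncated sum
\[
T_{n,k}\defe \sum_{j=1}^{m_n} X_j\inddd{|X_j|\le a_n\delta_n}\inddd{k/n\in R_{n,j}},
\]
then separately control the mean and apply a Bernstein-type concentration bound followed by a union bound over $k\in\{0,\dots,n\}$. Reindexing by the permutation $\sigma_n$ gives $W_{n,\ell}(k) = T_{n,k} - \sum_{j\in\what J_{n,k}\cap[\ell]} X_{j:m_n}\inddd{|X_{j:m_n}|\le a_n\delta_n}$, the correction being a sum of at most $\ell$ terms each bounded in absolute value by $a_n\delta_n$; hence $a_n\inv \max_k |W_{n,\ell}(k)-T_{n,k}|\le \ell\delta_n\to 0$ deterministically, so it suffices to show $a_n\inv \max_{k/n\in G}|T_{n,k}|\to 0$ in probability.

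Writing $T_{n,k}=(T_{n,k}-\esp T_{n,k}) + m_n\mu_n p_n(k)$ with $\mu_n\defe \esp[X_1\inddd{|X_1|\le a_n\delta_n}]$, I would first dispatch the mean. Karamata's theorem ensures that $|\mu_n|$ is at most slowly varying in $n$ for $\alpha\ge 1$. Combining this with $p_n(k)\le Cn^{-c_0}$ from \eqref{eq:p_n} and the consequence of \eqref{eq:a_n} and \eqref{eq:kappa} that $m_n/a_n\le Cn^{\kappa(1-1/\alpha)}$ up to a slowly varying factor (via $m_n\sim a_n^\alpha/L(a_n)$), one obtains $|\esp T_{n,k}|/a_n\le Cn^{\kappa(1-1/\alpha)-c_0+o(1)}$, which tends to zero precisely because of the strict inequality $\kappa(1-1/\alpha)<c_0$ imposed by \eqref{eq:kappa}.

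For the centered part, fix $k/n\in G$: the summands $X_j\inddd{|X_j|\le a_n\delta_n}\inddd{k/n\in R_{n,j}}$ are i.i.d.~in $j$, bounded in absolute value by $a_n\delta_n$, and have variance at most $v_n p_n(k)$, where Karamata's theorem estimates $v_n\defe \esp[X_1^2\inddd{|X_1|\le a_n\delta_n}]$ by $C(a_n\delta_n)^{2-\alpha}$ (up to slowly varying) when $\alpha\in[1,2)$, logarithmically when $\alpha=2$, and by a constant when $\alpha>2$. Bernstein's inequality at deviation $\epsilon a_n/2$, using $p_n(k)\le Cn^{-c_0}$ and noting that $\gamma<c_0/\alpha$ keeps the linear term $(a_n\delta_n)(\epsilon a_n)$ in the Bernstein denominator subdominant, yields an exponential bound of the form $\exp(-Cn^\eta)$ for some $\eta>0$; a union bound over the $n+1$ values of $k$ still gives a vanishing probability.

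The main obstacle is to verify, regime by regime for $\alpha\in[1,2)$, $\alpha=2$, and $\alpha>2$, that the Bernstein exponent genuinely dominates the polynomial union-bound factor and that the mean estimate tightens: the range $\gamma<c_0/\alpha$ is calibrated to keep $a_n\delta_n$ small relative to $a_n$, while \eqref{eq:kappa} is exactly the constraint needed to control the mean and is, as indicated in Remark \ref{rem:assump}, the reason the aggregation depth cannot be left unrestricted when $\alpha\ge 2$.
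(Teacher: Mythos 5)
Your proposal is correct and follows essentially the same route as the paper: reduce to the full truncated sum, split off the drift $m_n\mu_n p_n(k)$ (controlled via Karamata-type truncated-moment bounds, \eqref{eq:p_n} and \eqref{eq:kappa}), and handle the centered part by Bernstein's inequality with regime-by-regime variance estimates followed by a union bound over the $n+1$ values of $k$. The only real difference is your reduction step, which bounds the at most $\ell$ discarded terms deterministically by $\ell a_n\delta_n$, whereas the paper discards them on the high-probability event that the top $\ell$ order statistics exceed $a_n\delta_n$; both work (and note the restriction $\gamma<c_0/\alpha$ is really needed for the middle part, since any $\gamma>0$ already makes the linear Bernstein term harmless here).
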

\begin{proof}
Notice also that for all $\ell'\in\N$ fixed,
$\limsup_{n\to\infty}\proba(|X_{\ell':m_n}|/a_n\le \delta_n)=0$. Therefore, with
\[
V_{n,j}(k):= X_j\inddd{|X_j|/a_n\le\delta_n}\inddd{k/n\in R_{n,j}},
\] 
 it suffices to focus on
the event $\{a_n\inv \max_{k/n\in G}|\summ j1{m_n}V_{n,j}(k)|>\epsilon\}$. 
Introduce
\equh\label{eq:bnk}
\wt W_n(k):=\sum_{j=1}^{m_n}(V_{n,j}(k)-\esp V_{n,j}(k))\qmand b_n(k) :=  m_np_n(k)\esp\pp{X_1\inddd{|X_1|\le a_n\delta_n}}.
\eque 
We write
\[
\summ j1{m_n}V_{n,j}(k) = \wt W_n(k)+b_n(k).
\]
We shall need below that for every $\epsilon_1>0$ there exists a constant $C$ such that 
 \equh\label{eq:SV}
 \esp \pp{|X_1|^\alpha\inddd{|X_1|\le a_n\delta_n}}\le C a_n^{\epsilon_1}L(a_n), \alpha>0.
 \eque
To see this, write $F(x) = \proba(|X_1|\le x)$ and $\wb F(x) = 1-F(x) = x^{-\alpha}L(x)$ (recall \eqref{eq:RV}). It suffices to consider that $a_n\delta_n\to\infty$. Consider then a slowly varying sequence $\{d_n\}_{n\in\infty}$ with $d_n\to\infty$ and $d_n= o(a_n\delta_n)$,  
\begin{align*}
\esp \pp{|X_1|^\alpha\inddd{|X_1|\le a_n\delta_n}} & = \int_0^{a_n\delta_n}x^\alpha dF(x) \le d_n^\alpha + \int_{d_n}^{a_n\delta_n}x^\alpha dF(x) \\
&\le 2d_n^\alpha+\alpha\int_{d_n}^{a_n\delta_n}x^{\alpha-1}\wb F(x)dx \\
& = 2d_n^\alpha+\alpha L(a_n)\int_{d_n}^{a_n\delta_n}\frac{x^{-1}L(x)}{L(a_n)}dx \le  2d_n^\alpha+Ca_n^{\epsilon_1}L(a_n),
\end{align*}
where the second inequality is from integration by part, and 
the last step by Potter's bound. Assuming in addition that $d_n^\alpha = o(a_n^{\epsilon_1}L(a_n))$ we have \eqref{eq:SV}. 

We first show that the drift term is negligible:
\equh\label{eq:drift}
\limn \frac1{a_n}\max_{k/n\in G}|b_n(k)|  = 0.
\eque
In the case $\alpha=1$, \eqref{eq:drift} is obvious if $\limsupn a_n\delta_n<\infty$.  So assume that $a_n\delta_n\to\infty$. 
By \eqref{eq:SV}, we have
\[
\frac1{a_n}\max_{k/n\in G}{|b_n(k)|}  \le \frac{m_n}{a_n}p_n(G)\esp\pp{|X_1|\inddd{|X_1|\le a_n\delta_n}}\le Ca_n^{\epsilon_1}p_n(G),
\]
which goes to zero if $m_n$ (and hence $a_n$) grows at any rate that is polynomial in $n$ by taking $\epsilon_1>0$ small enough. 
In the case $\alpha>1$, 
\[
\frac1{a_n}\max_{k/n\in G}\abs{b_n(k)} \le \frac{m_np_n(G)}{a_n}\cdot\esp \pp{|X_1|},\]
so under the assumption that $m_n\le C n^\kappa$ for all $\kappa<c_0/(1-1/\alpha)$, \eqref{eq:drift} holds too. 

It remains to  prove that 
for all $\epsilon>0$,
\equh\label{eq:ineq_W}
\limsupn\proba\pp{\frac 1{a_n}\max_{k/n\in G}\abs{\wt W_{n}(k)}>\epsilon} =0.
\eque
We shall first apply the union bound on the maximal probability of interest above
\[
\proba\pp{\frac 1{a_n}\max_{k/n\in G}\abs{\wt W_{n}(k)}>\epsilon}\le (n+1)\max_{k/n\in G}\proba\pp{\frac1{a_n}\abs{\wt W_n(k)}>\epsilon},
\]
and then  establish below an exponential bound for the probability on the right-hand side. 
Note that for every $n,k\in\N$, $\{V_{n,j}(k)\}_{j\in\N}$ are i.i.d.~ and $|V_{n,j}(k)|\le a_n\delta_n$. 
Introduce $w_{n,\alpha}(k) = m_n \esp V_{n,1}^2(k)$. Then, Bernstein's inequality \citep[Section 2.7]{boucheron13concentration} tells that
\equh\label{eq:Bernstein}
\proba\pp{\frac{1}{a_n}\abs{\wt W_{n}(k)}\geq \epsilon} 
\le 2\exp\pp{-\frac{(a_n\epsilon)^2/2}{w_{n,\alpha}(k)+a_n\delta_n\cdot a_n\epsilon/3}}.\eque

Assume that $a_n\delta_n\to\infty$ for now. 
If $\alpha<2$, then for $k/n\in G$,
\begin{align*}
w_{n,\alpha}(k) & = m_n\esp V_{n,1}^2(k) \le m_n\esp \pp{X_j^2\inddd{|X_j|\le a_n \delta_n} }\cdot p_n(G)
\\
&\leq C m_n(a_n\delta_n)^{2-\alpha}L(a_n\delta_n) p_n(G)\\
& \le Ca_n^2\delta_n^{2-\alpha}\frac{L(a_n\delta_n)}{L(a_n)}p_n(G) \le Ca_n^2\delta_n^{2-\alpha-\epsilon_1}p_n(G),
\end{align*}
for some $\epsilon_1\in(0,2-\alpha)$,
where the second inequality follows from Karamata's theorem, the third from the assumption \eqref{eq:RV}, and the last from Potter's bound. 
So
\[
\proba\pp{\frac{1}{a_n}\abs{\wt W_{n}(k)}\geq \epsilon}
\le2\exp\pp{-\frac{C}{ p_n(G)+\delta_n}}, k/n\in G.
\]
Therefore \eqref{eq:ineq_W} holds.

If $\alpha>2$, $\esp V^2_{n,1}(k) \le Cp_n(G)$, and hence $w_{n,\alpha}(k)\le Cm_np_n(G)$. Thus \eqref{eq:Bernstein} says that
\equh\label{eq:alpha>2}
\proba\pp{\frac1{a_n}\abs{\wt W_n(k)}>\epsilon} \le 2\exp\pp{-\frac{C}{m_na_n^{-2}p_n(G)+\delta_n}}, k/n\in G.
\eque
Observe that $m_n a_n^{-2}p_n(G)\le Ca_n^{\alpha-2}L\inv (a_n)p_n(G)$, and that $m_n\in RV_\kappa$ implies $a_n\in RV_{\kappa/\alpha}$. Therefore, $m_n\le Cn^\kappa$ with $\kappa<c_0/(1-2/\alpha)$ implies \eqref{eq:ineq_W}. 

 For $\alpha=2$, if $\esp X_1^2<\infty$, then \eqref{eq:alpha>2} remains to hold, and $m_n$ can grow at any polynomial rate by the same argument above. So assume $\esp X_1^2=\infty$. By \eqref{eq:SV}, we obtain a similar inequality as \eqref{eq:alpha>2}, with $m_na_n^{-2}p_n(G)$ replaced by $m_na_n^{-2+\epsilon_1}L(a_n)p_n(G)\le C a_n^{\epsilon_1}p_n(G)$ (recall \eqref{eq:a_n}). Then \eqref{eq:ineq_W} still holds as long as $m_n$ (and hence $a_n$) grows no faster than any polynomial rate, as $\epsilon_1>0$ can be arbitrarily small.
 
 We have proved \eqref{eq:ineq_W} under the assumption that $a_n\delta_n\to\infty$. 
 In the case $\limsupn a_n\delta_n<\infty$, one can use the upper bounds for $w_{n,\alpha}(k)/a_n^2$ as before. This completes the proof.
\end{proof}
\begin{proof}[Proof of Theorem \ref{thm:1}]
Recall the convergence-determining class $\calG_0$ in \eqref{eq:G_0}. It suffices to show that, for all $d\in\N$ and all disjoint open intervals $G_{i}\in\calG_0$, $i=1,\dots,d$,
\[
\pp{\frac{1}{a_n}M_n(G_1),\cdots, \frac{1}{a_n}M_n(G_d)}\weakto
\pp{\CRSMa(G_1), \cdots,\CRSMa(G_d)}
\]
as $n\to\infty$.
By Proposition \ref{prop:RSM3}, we have 
$\calM_\ell\topp {\mathsf p}\weakto \mathsf p^{1/\alpha}\CRSMa$. For the sake of simplicity, assume that $\proba(\calR\cap G_i\ne\emptyset)>0$ for all $i=1,\dots,d$, whence $\lim_{\ell\to\infty}\limsupn\proba(M_{n,\ell}(G_i)=-\infty \mbox{ or } M_n(G_i) = -\infty) = 0$. 
By Proposition \ref{prop:top}, we have for all $\ell\in\N$,
\[
\pp{\frac{1}{a_n}M_{n,\ell}(G_1),\cdots, \frac{1}{a_n}M_{n,\ell}(G_d)}\weakto
\pp{\mathsf p^{-1/\alpha}\calM_\ell\topp {\mathsf p}(G_1), \cdots, \mathsf p^{-1/\alpha}\calM_\ell\topp {\mathsf p}(G_d)}.
\]
Then, by \citep[Theorem 3.2]{billingsley99convergence}, it remains to check that for any $\epsilon>0$,
\begin{equation}\label{eq:approx}
\lim_{\ell\to\infty}\limsupn \proba\pp{\frac{1}{a_n}|M_n(G)-M_{n,\ell}(G)|>\epsilon, M_{n,\ell}(G)\ne-\infty,M_n(G)\ne-\infty}=0.
\end{equation}
For $\alpha< 1$, for any $x>0$, observe that (i)
restricted to the event $M_{n,\ell}(G)\ne-\infty$ and $M_n(G)\ne-\infty$,
$|M_n(G)-M_{n,\ell}(G)|\le \summ j{\ell+1}{m_n}|X_{j:m_n}|$, (ii) for every $x>0$, 
\[
\lim_{\ell\to\infty}\proba\pp{\sum_{j=\ell+1}^{m_n}|X_{j:m_n}|\le \summ j1{m_n}|X_j|\inddd{|X_j|\le a_nx}} = 1,
\]
and (iii) for every $\epsilon$ fixed, there exists a constant $C$ such that for $n$ large enough (depending on $x$), 
\begin{align*}
\limsupn\proba\pp{\frac1{a_n}\esp\abs{\summ j1{m_n}|X_j|\inddd{|X_j|\le a_nx}}>\epsilon}& \le \frac1\epsilon\limsupn\frac{m_n}{a_n}\esp\pp{|X_1|\inddd{|X_1|\le a_nx}}\\
&=\frac C\epsilon\limsupn \frac{m_n}{a_n}(a_n x)^{1-\alpha}L(a_nx) \\
&= \frac{C}\epsilon x^{1-\alpha}.
\end{align*}
Taking $x\downarrow 0$ and combining the three facts above complete the proof of \eqref{eq:approx} for $\alpha<1$. 
For $\alpha\ge 1$, 
restricted to the event $M_{n,\ell}(G)\ne-\infty$ and $M_n(G)\ne-\infty$
we write
\[
\abs{M_{n}(G)-M_{n,\ell}(G)} \le\max_{k/n\in G} \pp {\abs{Z_{n,\ell}(k)}+\abs{W_{n,\ell}(k)}}.
\]
The desired \eqref{eq:approx} follows from Lemma \ref{lem:middle} and Lemma \ref{lem:bottom}. 
\end{proof}

\begin{Rem}\label{rem:drift}
We did not search for the most general assumption. For example, from the proof it is clear that in the case $\alpha\ge 1$, we could introduce a drift in the empirical random sup-measure
\[
M_{n}(\cdot) := \max_{\substack{k\in\{0,\dots,n\}\\k/n\in \cdot, J_{n,k}\ne\emptyset}}\pp{\sum_{j\in J_{n,k}}X_j-b_n(k)},
\]
with $b_n(k)$ as in \eqref{eq:bnk}.
In this way, for the bottom part it suffices to consider now
\[
W_{n,\ell}(k):= {\sum_{j\in \what J_{n,k}\setminus[\ell]}X_{j:m_n}\inddd{|X_{j:m_n}|/a_n\le \delta_n}}-b_n(k).
\]
The same analysis goes through for the rest and there is no need to guarantee \eqref{eq:drift}, and hence the assumption \eqref{eq:kappa} can be dropped. As a consequence, the same convergence holds with $m_n$ grows at any rate for $\alpha<2$, and any polynomial rate $n^\kappa$ with $\kappa\in (0,c_0/(1-2/\alpha))$ for $\alpha\ge 2$ (the only place this is needed is when showing \eqref{eq:alpha>2}). 
However, there seems to be no canonical way to pick $b_n(k)$ which depends on $\delta_n$. 

One can also see that $\delta_n = n^{-\gamma}$ is chosen for convenience. One may take $p_n(G)$ and $\delta_n$ to be slowly varying function and establish the same convergence for appropriately chosen $m_n, a_n$ under suitable conditions. The exact conditions would involve slowly-varying functions, and we do not pursue.
\end{Rem}

\NP
\section{Examples}\label{sec:example}
Here we present a few examples for our framework. For each example, we first introduce the random closed set $\calR$ in $\CRSMa$ and then $R_n$ (and the discrete model behind) so that Assumption \ref{assump:Rn} can be verified and Theorem \ref{thm:1} can be applied. %
Throughout, $\{\Gamma_j\}_{j\in\N}$ are consecutive arrival times of a standard Poisson process, independent from all other random variables. The first two examples are CRSMs without aggregations. We represent random sup-measures as in $\SM(E,\wb\R_+)$ for the sake of simplicity (see Remark \ref{rem:RSM}). 
\subsection{Independently scattered random sup-measures}
An independently scattered $\alpha$-Fr\'echet random sup-measures ($\alpha>0$) on $[0,1]$ with Lebesgue control measure takes the form
\[
\calM_\alpha^{\rm is}(\cdot)  \eqd \sup_{j\in\N}\frac1{\Gamma_j^{1/\alpha}}\inddd{\calV_j\in \cdot}.
\]
Here, we take $\{\calV_j\}_{j\in\N}$ as i.i.d.~uniform random variables on $[0,1]$, independent from $\{\Gamma_j\}_{j\in\N}$. 
With probability one the random closed sets do not intersect. 
This is the same as an $(\alpha,\calR)$-CRSM with $\calR\eqd\{\calV_1\}$. 
To put this example into our aggregation framework, we simply consider $V_n$ to be uniformly distributed over $\{0,1,\dots,n\}/n$. Then $R_n := \{V_n\}\weakto\{\calV_1\}$ in $\calF([0,1])$. All the assumptions are trivial to verify. It is quite straightforward to see that this example can be extended to more general control measure $\nu$, instead of Lebesgue, on $[0,1]$ such that, $\nu$ has a density that is continuous  on $[0,1]$ (and hence bounded), with $V_n$ accordingly constructed. We omit the details.

\subsection{Karlin random sup-measures}\label{sec:Karlin}
The  $\alpha$-Fr\'echet Karlin random sup-measure on $[0,1]$ with control measure $\nu$ and parameter $\beta\in(0,1)$, denoted by $\calM_{\alpha,\beta}^{\rm K}$ has several representations \citep{durieu18family}. 
For the sake of simplicity, assume that $\nu$ is a probability measure on $[0,1]$. Introduce 
\[
\calR^{\rm K}_{\beta}\eqd\bigcup_{k=1}^{Q_\beta}\{\calV_k\},
\]
where $\beta\in(0,1)$, 
 $Q_\beta$ is the Sibuya distribution \citep{sibuya79generalized} determined by $\esp z^{Q_\beta} = 1-(1-z)^\beta$ for $z\in[0,1]$, and $\{\calV_{k}\}_{k\in\N}$ be i.i.d.~random element in $[0,1]$ with law $\nu$ independent from $Q_\beta$.
Assuming that $\nu$ as a continuous density function on $[0,1]$, the independent copies do not intersect with probability one. 
Then, the Karlin random sup-measure can be defined as
\[
\calM_{\alpha,\beta}^{\rm K}(\cdot) :=  \calM_{\alpha,\calR^{\rm K}_\beta}^{\rm C}(\cdot) \eqd \sup_{j\in\N}\frac1{\Gamma_j^{1/\alpha}}\inddd{\calR^{\rm K}_{\beta,j}\cap\cdot\ne\emptyset}
\]
with $\{\calR^{\rm K}_{\beta,j}\}_{j\in\N}$ as  i.i.d.~copies of $\calR^{\rm K}_\beta$.
Note that $Q_\beta\weakto 1$ as $\beta\uparrow 1$, so $\calR_1^{\rm K}\eqd \{\calV_1\}$ and $\calM_{\alpha,1}^{\rm K}\eqd \calM_\alpha^{\rm is}$.

Now to put this random sup-measure into our aggregation framework, it suffices to find discrete random variables $V_n\in\{0,\dots,n\}/n$ such that $V_n\weakto \calV_1$, and consider its i.i.d.~copies $\{V_{n,i}\}_{i\in\N}$, independent from a Sibuya random variable $Q_\beta$. Then, $\bigcup_{i=1}^{Q_\beta}\{V_{n,i}\} \weakto \calR^{\rm K}_\beta$ in $\calF([0,1])$.
We verify the assumptions for $\nu= \rm Leb$, and in this case recall that $V_n$ is uniformly distributed over $\{0,\dots,n\}/n$. Then notice
\[
p_n(k) = \proba\pp{\frac kn\in R_n} = \esp\pp{1-\proba\pp{V_{n,1} \ne \frac kn}^{Q_\beta}} = \proba\pp{V_{n,1} = \frac kn}^\beta,
\]
and thus $p_n(G) = (n+1)^{-\beta}$. One can verify similarly that Assumption \ref{assump:Rn} remains true as long as $\nu$ has a continuous and bounded density on $[0,1]$. 
\begin{Rem}
The Karlin random sup-measure was introduced in \citep{durieu18family} for $E = \R_+$ and $\nu$ as the Lebesgue measure. The extension to more general $\nu$ is obvious. However, the model here that leads to the Karlin random sup-measure {\em is much simpler} than the discrete-time model investigated in \citep{karlin67central,durieu18family}. In \citep{durieu18family}, instead of aggregation over a family of $m_n$ i.i.d.~chains of rewards, the model therein can be viewed as an aggregation of a random number, say $K_n$, of reward chains, where the rewards are i.i.d.~with regularly-varying tails, but the reward times $\{R_{n,j}\}_{j=1,\dots,K_n}$ are {\em dependent and mutually exclusive}.  See \citep{durieu18family} for more details.
\end{Rem}

\subsection{Stable-regenerative random sup-measures}\label{sec:srs}
By stable-regenerative random sup-measures we refer to three subclasses of CRSMs with aggregations, where in each case $\calR$ is based on a variation of the stable-regenerative set. For all three CRSMs with aggregations, aggregations occur with probability one for $\beta>1/2$, due to the almost-sure intersection of the underlying random closed sets.
\subsubsection*{Standard stable-regenerative sets (starting from the origin)}
Let $\calR^{\rm sr}_\beta$ be an ordinary $\beta$-stable regenerative set. This can be defined in law as the closure of the image of a $\beta$-stable subordinator \citep{bertoin99subordinators,giacomin07random}. So $\calR^{\rm sr}_\beta$ is a random closed set in $[0,\infty)$ and 0 is a fixed point ($\proba(0\in\calR^{\rm sr}_\beta) = 1$). Let $\calR^{\rm sr}_{\beta}$ and $\{\calR^{\rm sr}_{\beta,j}\}_{j\in\N}$ be i.i.d.~ $\beta$-stable regenerative sets. It is well known \citep{bertoin99subordinators} that with
\equh\label{eq:srs_intersection}
\bigcap_{j=1}^\ell\calR^{\rm sr}_{\beta,j}\eqd  \begin{cases}
\calR^{\rm sr}_{\beta_\ell}  & \mbox{ if } \beta_\ell\in(0,1),\\
\emptyset & \mbox{ otherwise,}
\end{cases}\qmwith \beta_\ell := \ell\beta-\ell+1\in(0,1).
\eque
The random sup-measure in this case is
\[
\calM^{\rm sr}_{\alpha,\beta}(\cdot) := \calM_{\alpha,\calR^{\rm sr}_\beta}^{\rm Ca} (\cdot)\eqd \sup_{t\in\cdot}\sum_{j=1}^\infty\frac1{\Gamma_j^{1/\alpha}}\inddd{t\in\calR^{\rm sr}_{\beta,j}},
\]
the right-hand side above uses the representation involving upper-semi-continuous functions (similar to \eqref{eq:ssrRSM} below as first introduced in the literature). 

Now we look at the corresponding discrete model. It is well known that $\calR^{\rm sr}_\beta$ arises as the scaling limit of the set of renewal times of a renewal process with regularly varying inter-arrival renewal times. In particular, consider i.i.d.~$\indn Y$ taking values in $\N$, and 
\equh\label{eq:tau}
\vv\tau\equiv \{\tau_j:j=0,1,\dots\} \qmwith \tau_0:=0, \tau_j:=Y_1+\cdots+Y_j, j\in\N.
\eque
So $\vv\tau$ is the collection of all renewal times, and it is well known that $R_n:=\vv\tau/n\cap[0,1]\weakto \calR^{\rm sr}_\beta\cap[0,1]$ \citep[Appendix A.5]{giacomin07random}.
To put this example into our aggregation framework, we shall consider $E = (0,1]$ and exclude the origin in particular, as obviously $0\in R_n$ and at $k=0$ all the rewards will be collected, implying an asymptotic behavior that is qualitatively different from those at other times $k/n\in\{1,\dots,n\}$. 

Now we verify our assumptions. 
Note that $p_n(k) = \proba(k\in\vv\tau) \equiv u(k)$ is nothing but the renewal mass function of the renewal process, which has been well investigated in the literature. In particular, it follows from \citep{doney97onesided} that with $p_Y(k) \equiv \proba(Y = k)$ under the assumption
\equh\label{eq:Y}
\wb F_Y(n)\equiv \proba(Y>n) \sim n^{-\beta}L(n) \qmand  \sup_{n\in\N}\frac{np_Y(n)}{\wb F_Y(n)}<\infty,
\eque
we have 
\equh\label{eq:u}
u(n) = \proba(n\in \vv\tau) \sim n^{\beta-1}\frac{\Gamma(1-\beta)}{\Gamma(\alpha)L(n)} \mmas n\to\infty.
\eque
Now the assumption \eqref{eq:p_n} follows as $G$ is an interval bounded away from zero. 
It remains to verify the intersection convergence \eqref{eq:intersection_convergence}. 
Indeed, let $\{R_{n,j}\}_{j\in\N}$ be i.i.d.~copies of $R_n$ and $R_{n,[\ell]}=\bigcap_{j=1}^\ell R_{n,j}$ as before. It suffices to remark that, with $\beta_\ell\in(0,1)$, 
the simultaneous renewals of $\ell$ i.i.d.~renewal processes with parameter $\beta$ form a renewal process with parameter $\beta_\ell$, so 
 $R_{n,[\ell]}$ has the same law $(\vv\tau_{\beta_\ell}\cap\{0,\dots,n\})/n$. 
 (For details, see \citep[Appendix A]{samorodnitsky19extremal}.)
 With $\beta_\ell\le0$, $\bigcap_{j=1}^\ell\calR_{\beta,j}^{\rm sr} = \{0\}$ almost surely, and hence $R_{n,[\ell]}\weakto\{0\}$ \citep[Theorem 2.1]{samorodnitsky19extremal}. \subsubsection*{Randomly shifted stable-regenerative sets}
Introduce $\calR_\beta^{\rm srs} :=\calV_\beta+\calR_\beta^{\rm sr}$, where $\calR_\beta^{\rm sr}$ is as above and $\calV_{\beta}$ is a random variable taking values in $[0,1]$ with probability density function $(1-\beta)v^{-\beta}dv$, independent from $\calR_\beta^{\rm sr}$. 
Then, 
\equh\label{eq:ssrRSM}
\calM^{\rm srs}_{\alpha,\beta}(\cdot) :=\calM_{\alpha,\calR^{\rm srs}_\beta}^{\rm Ca}(\cdot) \eqd \sup_{t\in\cdot}\sum_{j=1}^\infty\frac1{\Gamma_j^{1/\alpha}}\inddd{t\in\calR^{\rm srs}_{\beta,j}},
\eque
where $\{\calR_{\beta,j}\}^{\rm srs}$ are i.i.d.~copies of $\calR_\beta^{\rm srs}$, independent from $\{\Gamma_j\}_{j\in\N}$.
This random sup-measure was introduced in \citep{samorodnitsky19extremal}, and therein the dichotomy \eqref{eq:srs_intersection}  with $\calR^{\rm sr}_\beta$ replaced by $\calR^{\rm srs}_\beta$ was proved to remain true. So for $\beta>1/2$ aggregations occur with probability one. 

To put this in our aggregation framework, one might modify the construction of the previous example, by introducing in addition an appropriate discretization of $\calV_\beta$. Alternatively, there is a canonical construction of  $R_n\weakto \calR_\beta^{\rm srs}$ in $\calF([0,1])$, in view of renewal times of certain dynamical systems from infinite ergodic theory. Such a construction has played a crucial role in the studies of a large family of stable processes with long-range dependence in \citep{rosinski96classes,owada15functional,lacaux16time,samorodnitsky19extremal}. We refer to the presentations therein (and Assumption \ref{assump:Rn} was known to hold). 

\begin{Rem}
Figure \ref{fig:2} provides an illustration comparing $\calM_\alpha^{\rm is}$ (independently scattered), $\CRSM$ and $\CRSMa$, the last two using $\calR=\calR^{\rm srs}_\beta$, where for each random sup-measure, a realization of the underlying point process 
\eqref{eq:PPP_convergence}, \eqref{eq:Choquet}, \eqref{eq:CRSMa_PP} respectively
and the corresponding hypograph of the random sup-measure are provided.
Recall that 
for every sup-measure $m$, there exists a unique upper-semi-continuous function $f$ (known as the sup-derivative of $m$) such that $m(\cdot) = \sup_{t\in\cdot}f(t)$, and the corresponding unique hypograph of $m$ is the closed set ${\rm hypo}(m):=\{(u,x)\in[0,1]\times\wb \R_+:x\le f(u)\}$. 
With the same $\alpha,\calR$, $\CRSM$ and $\CRSMa$  can be coupled 
by using the right-hand sides of \eqref{eq:CRSM} and \eqref{eq:CRSMa} as definitions sharing the same $\{(\Gamma_\ell,\calR_\ell)\}_{\ell\in\N}$. In this way almost surely $\CRSM(A)\le \CRSMa(A)$ for all $A\subset[0,1]$, or equivalently ${\rm hypo}(\CRSM)\subset{\rm hypo}(\CRSMa)$,  as illustrated in Figure \ref{fig:2}. In words, ${\rm hypo}(\CRSMa)$ is obtained by pushing upwards  ${\rm hypo}(\CRSM)$ at those locations where intersections and hence aggregations occur (e.g.~from the value $\max\{\Gamma_i^{-1/\alpha},\Gamma_j^{-1/\alpha}\}$ at $\calR_i\cap\calR_j$ up to $\Gamma_i^{-1/\alpha}+\Gamma_j^{-1/\alpha}$).
\end{Rem}
\begin{figure}[ht!]
\begin{center}
\includegraphics[width = 0.75\textwidth]{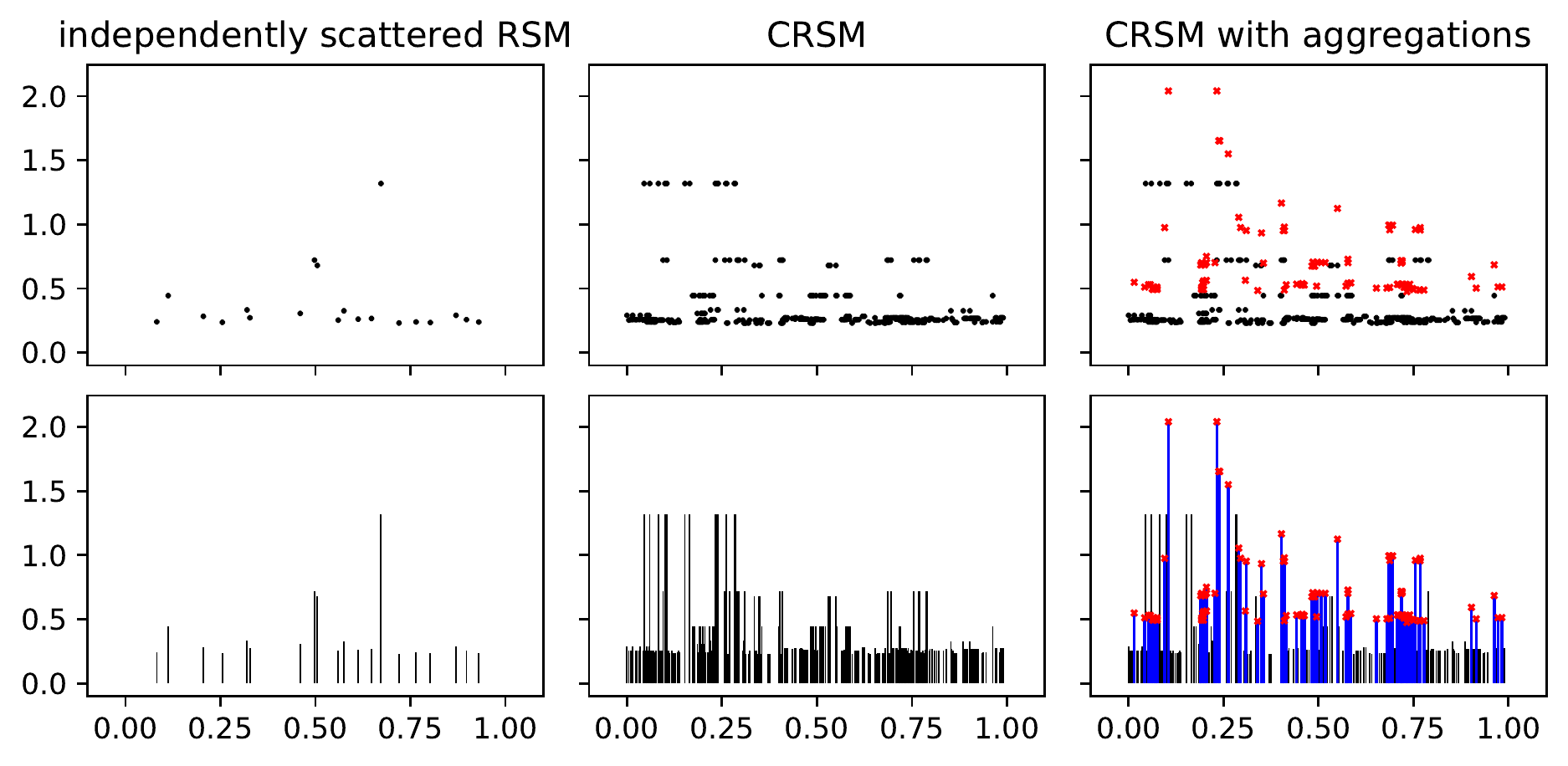}
\end{center}
\caption{Illustrations of random sup-measures. Top: the underlying point processes (magnitudes ($y$-axis) and locations ($x$-axis) of extremes). Bottom: the corresponding hypographs. 
Left: $\calM^{\rm is}_\alpha$, middle (and right): CRSM (with aggregations) with $\calR \eqd  \calR_{\beta}^{\rm srs}, \beta = 0.6$.
The top $m=20$ largest values $\{\Gamma_i^{-1/\alpha}\}_{i=1,\dots,m}$ and their locations are illustrated.
For CRSM, the extremes corresponding to intersections $\{\calR_i\cap\calR_j\}_{1\le i<j\le m}$ are marked by red crosses, and the corresponding `push-ups' of the hypographs are marked in blue color. In each case discrete random closed sets $R_{n,j}\subset\{0,\dots,n\}/n, j = 1,\dots,m$ are used (with $n=400$) to approximate the corresponding $\{\calR_j\}_{j=1,\dots,m}$ and their intersections.}
\label{fig:2}
\end{figure}
\begin{Rem}\label{rem:motivation}
There is a natural connection between Theorem \ref{thm:1} and \citep{samorodnitsky19extremal}. Assume $\alpha\in(0,2)$ and write 
\[
\zeta_{m,n}(k) := \summ j1{m}X_j\inddd{k\in R_{n,j}}.
\]
Under mild assumption and with $n\in\N$ fixed, after appropriate normalization $\{\zeta_{m,n}(k)\}_{k=0,\dots,n}$ converges in distribution as $m\to\infty$ to a stable (non-Gaussian) process, say $\{\calX_n(k)\}_{k=0,\dots,n}$. Then the relation between Theorem \ref{thm:1} and the limit theorem in \citep{samorodnitsky19extremal} can be summarized as the following diagram. \medskip
\begin{center}
\begin{tikzcd}[column sep=small]
\{\zeta_{m,n}(k)\}_{k=0,\dots,n} \arrow{rr}{\mbox{empirical  RSM, } m,n\to\infty \mbox{ (Thm.~\ref{thm:1})}} \arrow[swap]{dr}{m\to\infty} & & \CRSMa 
\\
& \mbox{stable process }  \{\calX_n(k)\}_{k=0,\dots,n}  \arrow[swap]{ur}{\mbox{empirical RSM, } n\to\infty, \mbox{ \citep{samorodnitsky19extremal}}}  & 
\end{tikzcd}  \medskip
\end{center}
The limit theorem in \citep{samorodnitsky19extremal} investigated the second part of the a {\em double-limit procedure} by first taking $m\to\infty$ so that $b_m\inv\{\xi_{m,n}(k)\}_{k\in\{0,\dots,n\}}\weakto \ccbb{\calX(k)}_{k=0,\dots,n}$ for some $b_m$,  and second taking  the limit as $n\to\infty$ to show $c_n\inv\max_{k/n\in\cdot}|\calX(k)| \weakto \calM_{\alpha,\calR}^{\rm Ca}$ for some $c_n$. 
Here, Theorem \ref{thm:1} investigates the {\em single-limit} procedure by letting $m\to\infty$ and $n\to\infty$ at the same time, with $m = m_n$, and proving $\limn a_n\inv\max_{k/n\in\cdot}|\xi_{m_n,n}(k)|\weakto \calM_{\alpha,\calR}^{\rm Ca}$ and allows more general classes of $\calR$. 

Comparisons between single-limit and double-limit procedures for aggregated models as in the diagram above have been known in the literature, especially for stochastic processes with long-range dependence. It is common that the same stochastic process arises in the limit for both procedures (e.g.~\citep{enriquez04simple,dombry09discrete}), and so is our case here. Therefore the aggregated model proposed in this paper provides an explanation to the abnormal limit behavior described in \citep{samorodnitsky19extremal}. In particular the aggregated model keeps two key features of the underlying dynamics: the renewal processes are having infinite mean renewal time, and the renewal times from independent renewal processes may intersect.  

We also point out that for more sophisticated models it is possible that, depending on the rate and also on the order of taking the limit in the double-limit procedure, different limit objects may arise at the end (e.g.~
\citep{pipiras04slow}).
\end{Rem}

\subsubsection*{Stable-regenerative sets with pinning} Here we present another variation based on stable-regenerative random sup-measures, based on {\em stable-regenerative sets with pinning}, denoted by  $\calR_{\beta}^{\rm srp}$. The CRSM with aggregations is then $\calM_{\alpha,\calR_\beta^{\rm srp}}^{\rm Ca}$, and we focus on $\calR_\beta^{\rm srp}$ in the sequel. Formally it has the conditional law of $\calR_\beta^{\rm sr}$ as before, given that $1\in\calR_\beta^{\rm sr}$. Our main reference is the very nice presentation of $\calR_\beta^{\rm srp}$, from a limit-theorem point of view, by \citet[Appendix A]{caravenna16continuum}. Let $\vv\tau$ be  the renewal process as in \eqref{eq:tau} with heavy-tailed return time satisfying \eqref{eq:Y}. Then, the following is from \citep[Proposition A.8]{caravenna16continuum}.
\begin{Prop}\label{prop:CSZ}
Under the assumption \eqref{eq:Y}, 
\equh\label{eq:rsp}
\calL\pp{\frac1n\pp{\vv\tau\cap\{0,\dots,n\}}\mmid n\in\vv\tau}\to \calL(\calR_\beta^{\rm srp}). 
\eque
\end{Prop}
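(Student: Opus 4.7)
The plan is to establish the convergence of conditional laws in the Fell topology of $\calF([0,1])$ by verifying convergence of capacity functionals over the separating class $\calG_0$ recalled in \eqref{eq:G_0}. By Corollary 1.7.14 of \citet{molchanov17theory}, since $\calR_\beta^{\rm srp}$ is almost surely light on $(0,1)$ (inherited from $\calR_\beta^{\rm sr}$), so that $\proba(\calR_\beta^{\rm srp}\cap A\ne\emptyset) = \proba(\calR_\beta^{\rm srp}\cap \wb A\ne\emptyset)$ for all $A\in\calG_0$, it suffices to prove
\[
\proba\pp{(\vv\tau/n) \cap A_i \ne \emptyset, i=1,\ldots,d \mmid n \in \vv\tau} \to \proba\pp{\calR_\beta^{\rm srp} \cap A_i \ne \emptyset, i=1,\ldots,d}
\]
for any finite disjoint collection of open intervals $A_i = (a_i,b_i)\in\calG_0$ with $0\le a_1<b_1<\cdots<a_d<b_d\le 1$.

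First I would use the Markov property of $\vv\tau$ to decompose the event of interest on the first and last renewal time inside each $nA_i$. Writing these indices as $k_i\le k_i'$ with $k_i, k_i' \in \{\ceil{na_i},\ldots,\sfloor{nb_i}\}$, and defining the visit-free probability $q_n(j,j';I):=\proba(\vv\tau\cap (j,j')\cap I = \emptyset\mid j\in\vv\tau)$ on an interval $I$, the numerator factors as
\[
\sum_{k_1\le k_1'<\cdots<k_d\le k_d'}u(k_1)\,q(0,k_1;(na_1,nb_1))\prod_{i=1}^{d-1}u(k_{i+1}-k_i')q(k_i',k_{i+1};(na_{i+1},nb_{i+1}))\,u(n-k_d'),
\]
with analogous gap factors at the boundaries, handled by inclusion–exclusion over each $nA_i$ to enforce ``first/last hit.'' The denominator is simply $u(n)$.

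Next I would invoke the strong renewal theorem \eqref{eq:u} under \eqref{eq:Y}, which gives $u(k)\sim c k^{\beta-1}/L(k)$ as $k\to\infty$. Dividing by $u(n)$ and rescaling $(k_i,k_i') = n(x_i,x_i')$ converts the sum into a Riemann sum that converges to the multi-dimensional integral
\[
\int_{\substack{x_i\le x_i'\\x_i,x_i'\in A_i}} c\, x_1^{\beta-1}\pp{\prod_{i=1}^{d-1}(x_{i+1}-x_i')^{\beta-1}}(1-x_d')^{\beta-1}\, dx_1\cdots dx_d'
\]
(up to corrections from the gap factors $q$, whose limits give the corresponding first/last-hit densities inside each $A_i$). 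Finally, I would identify this limit with the capacity functional of $\calR_\beta^{\rm srp}$ by matching it against the description of the pinned stable regenerative set as the range of the $\beta$-stable subordinator bridge from $0$ to $1$, whose finite-dimensional hitting probabilities have the same product-of-power-law form via the density of the bridge.

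The main obstacle is a uniform control of the renewal-mass asymptotics when gaps $k_{i+1}-k_i'$ become small compared to $n$. The pointwise equivalence $u(k)\sim ck^{\beta-1}/L(k)$ must be upgraded to a uniform upper bound $u(k)\le C k^{\beta-1}/L(k)$ on the full range, and the integrability of $x^{\beta-1}$ near zero (since $\beta\in(0,1)$, hence $\int_0^1 x^{\beta-1}dx<\infty$) together with Potter's bound ensures the Riemann-sum approximation passes to the limit by dominated convergence. Careful treatment is also needed at the endpoints $\{0\}, \{1\}$, but these carry no Lebesgue mass so do not affect the limit; the full argument is carried out in \citep[Proposition A.8]{caravenna16continuum}.
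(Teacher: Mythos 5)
The paper itself offers no proof of this proposition: it is quoted directly from \citep[Proposition A.8]{caravenna16continuum}, so the only ``approach'' in the paper is the citation, and your proposal, which ultimately defers the complete argument to that same reference, is in this sense consistent with it. Your outline of what lies behind the citation is the standard one and identifies the right ingredients: reduction to convergence of capacity functionals over the separating class $\calG_0$ (with lightness of $\calR_\beta^{\rm srp}$ on $(0,1)$ supplying the boundary condition needed for \citep[Corollary 1.7.14]{molchanov17theory}), Doney's strong renewal theorem \eqref{eq:u} under \eqref{eq:Y}, and Riemann-sum arguments controlled by Potter bounds; note that \citet{caravenna16continuum} proceed slightly differently, proving convergence of the joint laws of the last-renewal-before/first-renewal-after pairs at finitely many fixed times and then upgrading to convergence of the random closed sets, rather than working directly with interval-hitting probabilities. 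Two slips in your displayed decomposition are worth flagging. First, the event that $k_1$ is the first renewal in $nA_1$ does not have probability $u(k_1)\,q(0,k_1;(na_1,nb_1))$: the events $\{k_1\in\vv\tau\}$ and $\{\vv\tau\cap(na_1,k_1)=\emptyset\}$ are not independent, and the correct expression, obtained by decomposing at the last renewal at or before $na_1$, is $\sum_{j\le na_1}u(j)\,p_Y(k_1-j)$; moreover the factors $u(k_i'-k_i)$ joining the first and last renewal inside each block are missing from your product. Second, \eqref{eq:Y} provides only an upper bound on $p_Y$, not its asymptotics, so the ``gap'' contributions should be expressed through the regularly varying tail $\wb F_Y$ (e.g.\ by summation by parts) rather than through a local limit for $p_Y$. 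Since the complete argument is in any case the one in the cited reference, these are presentational defects of the sketch rather than a fatal gap, but the displayed identity as written is not correct.
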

The above is understood as that the conditional law of $R_n:=(\vv\tau\cap\{0,\dots,n\})/n$ given that $n\in\vv\tau$ converges weakly to the law of $\calR_\beta^{\rm srp}$. Note that actually more was proved  in \citep{caravenna16continuum}: it was first shown that the limit of the left-hand side exists (in the sense that the finite-dimensional distributions exist and are consistent).  It is also known that the so-obtained limit uniquely determines the law of a random closed set \citep[Proposition A.6]{caravenna16continuum}. So  $\calR_\beta^{\rm srp}$ can be defined via this scaling-limit approach.

Since $0$ and $1$ are fixed points so we choose $E = (0,1)$.  Now we examine the intersection property of $\calR_\beta^{\rm srp}$, which is very similar to the property of $\calR_\beta^{\rm sr}$. Since the simultaneous renewals yield a new renewal process, Proposition \ref{prop:CSZ} applies (see discussions after \eqref{eq:u}) and we have,
 \[
\calL\pp{R_{n,[\ell]}\mmid 1\in R_{n,[\ell]}}\to \calL(\calR_{\beta_\ell}^{\rm srp}),
\]
interpreted in a similar way as \eqref{eq:rsp}.
So to verify the assumption on intersections \eqref{eq:intersection_convergence}, it remains to show that,
\equh\label{eq:intersection_srp}
\bigcap_{i=1}^\ell \calR_{\beta,i}^{\rm srp} \eqd \calR_{\beta_\ell}^{\rm srp},
\eque
where $\{\calR_{\beta,i}^{\rm srp}\}_{i=1,\dots,\ell}$ are i.i.d.~copies of $\calR_\beta^{\rm srp}$. To see this, one way to go is to recall that (i) $\calR^{\rm sr}_\beta$ has the same law as the zero sets of a Bessel process starting at zero with dimension $2-2\beta$, and (ii) $\calR^{\rm srp}_\beta$ has the same law as the zero sets of a Bessel bridge (a Bessel process  that equal 0 at times 0 and 1, and  restricted to time interval $[0,1]$) with the same dimension. For the first fact, see \citep{bertoin99subordinators}. For the second, unable to find a reference we derive it again from \citep{caravenna16continuum}: it suffices to compare Eq.~(4.17) and Eq.~(4.18) therein (the finite-dimensional distributions of $\calR^{\rm sr}_\beta$ and $\calR^{\rm srp}_\beta$ respectively), and recall that the Bessel bridge, denoted by $\{\calB^{2-2\beta,\rm br}_t\}_{t\in[0,1]}$, can be obtained as a transformation of a Bessel process with the same dimension, denoted by $\{\calB_t^{2-2\beta}\}_{t\ge 0}$, via 
\[
\ccbb{\calB^{2-2\beta,\rm br}_t}_{t\in (0,1)}\eqd \ccbb{(1-t)^2\calB^{2-2\beta}_{t/(1-t)}}_{t\in(0,1)}.
\]
(See~\citep[Theorem 5.8]{pitman81bessel} and \citep[Section 5]{pitman82decomposition}.) Then \eqref{eq:intersection_srp} follows from the corresponding result for stable regenerative sets. This verifies \eqref{eq:intersection_convergence}.
 
Now, it remains to verify \eqref{eq:p_n} on $p_n(G)$. This time we have (recall that $R_n$ is considered with respect to the conditional probability)
\[
p_n(k) = \proba(k\in\vv\tau\mid n\in\vv\tau) = \frac{u(k)u(n-k)}{u(n)},
\]
with $u\in RV_{\beta-1}$ as in \eqref{eq:u}. Then, for $0<a<b<1$, Potter's bound implies that
 \[
 p_n((a,b)) \equiv \max_{k/n\in(a,b)}p_n(k) \le C \max_{k/n\in(a,b)}u(k)\le Cn^{\beta-1+\epsilon}
 \]
 for some $\epsilon>0$. It suffices to take $\epsilon\in(0,1-\beta)$
so that Assumption \ref{assump:Rn} is satisfied.

\subsection*{Acknowledgement} YW thanks Shuyang Bai, Olivier Durieu,  Ilya Molchanov, Gennady Samorodnitsky and Na Zhang for very helpful discussions, and the Associate Editor and two anonymous referees for helpful comments and suggestions. YW's research was partially supported by Army Research Office grants W911NF-17-1-0006 and 
W911NF-20-1-0139 at University of Cincinnati.

\def\cprime{$'$} \def\polhk#1{\setbox0=\hbox{#1}{\ooalign{\hidewidth
  \lower1.5ex\hbox{`}\hidewidth\crcr\unhbox0}}}
  \def\polhk#1{\setbox0=\hbox{#1}{\ooalign{\hidewidth
  \lower1.5ex\hbox{`}\hidewidth\crcr\unhbox0}}}

\end{document}